\newcommand{\noun}[1]{\textsc{#1}}
\numberwithin{equation}{section}
\numberwithin{figure}{section}
\theoremstyle{plain}
\newtheorem{thm}{\protect\theoremname}
  \theoremstyle{definition}
  \newtheorem{defn}[thm]{\protect\definitionname}
  \theoremstyle{plain}
  \newtheorem{prop}[thm]{\protect\propositionname}
  \theoremstyle{definition}
  \newtheorem{example}[thm]{\protect\examplename}
  \theoremstyle{remark}
  \newtheorem{rem}[thm]{\protect\remarkname}
  \theoremstyle{plain}
  \newtheorem{lem}[thm]{\protect\lemmaname}
   \providecommand{\fg}{\ifdim\lastskip>\z@\unskip\fi~\frqq}%
  \providecommand{\definitionname}{Definition}
  \providecommand{\examplename}{Example}
  \providecommand{\lemmaname}{Lemma}
  \providecommand{\propositionname}{Proposition}
  \providecommand{\remarkname}{Remark}
\providecommand{\theoremname}{Theorem}
\begin{document}

\title{equivariantly uniformly rational varieties}

\author{Charlie Petitjean}

\address{Charlie Petitjean, Institut de Math\'ematiques de Bourgogne, Universit\'e
de Bourgogne, 9 Avenue Alain Savary, BP 47870, 21078 Dijon Cedex,
France}

\email{charlie.petitjean@u-bourgogne.fr}

\keywords{uniform rationality, $\mathbb{T}$-varieties, hyperbolic\noun{ $\mathbb{G}_{m}$}-actions,
birational equivalence of pairs of curves, Koras-Russell threefolds.}

\subjclass[2000]{14L30, 14R20, 14M20, 14E08}
\begin{abstract}
We introduce equivariant versions of uniform rationality: given an
algebraic group $G$, a $G$-variety is called $G$-uniformly rational
(resp. $G$-linearly uniformly rational) if every point has a $G$-invariant
open neighborhood equivariantly isomorphic to a $G$-invariant open
subset of the affine space endowed with a $G$-action (resp. linear
$G$-action). We establish a criterion for $\mathbb{G}_{m}$-uniform
rationality of smooth affine varieties equipped with hyperbolic $\mathbb{G}_{m}$-actions
with a unique fixed point, formulated in term of their Altmann-Hausen
presentation. We prove the $\mathbb{G}_{m}$-uniform rationality of
Koras-Russell threefolds of the first kind and we also give an example
of a non $\mathbb{G}_{m}$-uniformly rational but smooth rational
$\mathbb{G}_{m}$-threefold associated to pairs of plane rational
curves birationally non equivalent to a union of lines.
\end{abstract}

\maketitle

\section*{Introduction}

A \emph{uniformly rational} variety is a variety for which every point
has a Zariski open neighborhood isomorphic to an open subset of an
affine space. A uniformly rational variety is in particular a smooth
rational variety, but the converse is an open question \cite[p.885]{Gr}. 

In this article, we introduce stronger equivariant versions of this
notion, in which we require in addition that the open subsets are
stable under certain algebraic group actions. The main motivation
is that for such varieties uniform rationality, equivariant or not,
can essentially be reduced to rationality questions at the quotient
level. We construct examples of smooth rational but not equivariantly
uniformly rational varieties, the question of their uniform rationality
is still open. We also establish equivariant uniform rationality of
large families of affine threefolds.

We focus mainly on actions of algebraic tori $\mathbb{T}$. The \emph{complexity}
of a $\mathbb{T}$-action on a variety is the codimension of a general
orbit, in the case of a faithful action, the complexity is thus simply
$\mathrm{dim}(X)-\mathrm{dim}(\mathbb{T})$. Complexity zero corresponds
to toric varieties, which are well-known to be uniformly rational
when smooth. In fact they are even $\mathbb{T}$-linearly uniformly
rational in the sense of Definition \ref{definition equivariant}
below. The same conclusion holds for smooth rational $\mathbb{T}$-varieties
of complexity one by a result of \cite[Chapter 4]{Ke-Kn-Mu-S}. In
addition, by \cite[Theorem 5]{Ar-Pe-S=0000FC} any smooth complete
rational $\mathbb{T}$-variety of complexity one admits a covering
by finitely many open charts isomorphic to the affine space.

In this article, as a step toward the understanding of $\mathbb{T}$-varieties
of higher complexity, we study the situation of affine threefolds
equipped with \emph{hyperbolic $\mathbb{G}_{m}$-actions}. We use
the general description developed by Altmann, Hausen and S\"{u}ss (see
\cite{A-H,A-H-S=0000FC} ) in terms of pairs $(Y,\mathcal{D})$, where
$Y$ is a variety of dimension $\mathrm{dim}(X)-\mathrm{dim}(\mathbb{T})$
and $\mathcal{D}$ a so-called \emph{polyhedral divisor} on $Y$.
In our situation, $Y$ is a rational surface and our main result,
Theorem \ref{Theorem equivalence}, allows us to translate equivariant
uniform rationality into a question of birational geometry of curves
on rational surfaces. 

The article is organized as follows. In the first section we introduce
equivariant versions of uniform rationality and summarize A-H presentations
of affine $\mathbb{G}_{m}$-varieties. The second section explains
how to use these presentations for the study of uniform rationality
of these varieties. In the third section, we focus on families of
\emph{$\mathbb{G}_{m}$-rational threefolds}, we show, for example,
that all \emph{Koras-Russell threefolds of the first kind}, and certain
ones of the second kind (see \cite{K-R,Ka-K-ML-R}) are equivariantly
uniformly rational and therefore uniformly rational. In a fourth section
we find examples of smooth rational $\mathbb{G}_{m}$-threefolds including
other Koras-Russell threefolds which are not equivariantly uniformly
rational. It is not known if these varieties are uniformly rational,
without any group action. In the last section, we introduce a weaker
notion of equivariant uniform rationality and we illustrate differences
between all these notions.

The author would like to thank Karol Palka and J\'er\'emy Blanc for helpful
discussions concerning Subsection 3.2.

\newpage{}

\section{\label{sec:1} Preliminaries}

\noindent

\subsection{Basic examples of uniformly rational varieties}

Recall that a a variety of dimension $n$ is called uniformly rational
if every point has a Zariski open neighborhood isomorphic to an open
subset of $\mathbb{A}^{n}$. Some partial results are known, for instance
every smooth complete rational surface is uniformly rational. In fact
it follows from \cite{Bo-B=0000F6,Bod-Hau-S-Vi} that the blowup of
a uniformly rational variety along a smooth subvariety is again uniformly
rational. Since open subset of uniformly rational varieties are uniformly
rational, it follows that every open subsets of the blowup of a uniformly
rational variety along a smooth subvariety is again uniformly rational.
In particular this holds for \emph{affine modifications} of uniformly
rational varieties along smooth subvarieties.
\begin{defn}
\label{affine modification}\cite{Ka-Z,Du} Let $(X,D,Z)$ be a triple
consisting of a variety $X$, an effective Cartier divisor $D$ on
$X$ and a closed sub-scheme $Z$ with ideal sheaf $\mathcal{I}_{Z}\subset\mathcal{O}_{X}(-D)$.
The \emph{affine modification} of the variety $X$ along $D$ with
center $Z$ is the scheme $X'=\tilde{X}_{Z}\setminus D'$ where $D'$
is the proper transform of $D$ in the blow-up $\tilde{X}_{Z}\rightarrow X$
of $X$ along $Z$. 
\end{defn}
A particular type of affine modification is the \emph{hyperbolic modification}
of a variety $X$ with center at a closed sub-scheme $Z\subset X$
(see \cite{Z}): It is defined as the affine modification of $X\times\mathbb{A}^{1}$
with center $Z\times\{0\}$, and divisor $X\times\{0\}$. As an immediate
corollary of \cite[Proposition 2.6]{Bo-B=0000F6}, we obtain the following
result:
\begin{prop}
\label{cor:The-affine-modifications}Affine modifications and hyperbolic
modifications of uniformly rational varieties along smooth centers
are again uniformly rational.
\end{prop}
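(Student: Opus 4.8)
The plan is to observe that an affine modification is, tautologically, an open subvariety of a blow-up, and then to feed this into the two facts recalled above: that the blow-up of a uniformly rational variety along a smooth subvariety is uniformly rational \cite[Proposition 2.6]{Bo-B=0000F6}, and that open subsets of uniformly rational varieties are uniformly rational.

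First I would treat the affine modification. By Definition \ref{affine modification}, the affine modification $X'=\tilde{X}_{Z}\setminus D'$ is an open subvariety of the blow-up $\tilde{X}_{Z}\to X$ of $X$ along the center $Z$. Since $X$ is uniformly rational it is in particular smooth, so blowing up the smooth center $Z$ yields a uniformly rational variety $\tilde{X}_{Z}$ by \cite[Proposition 2.6]{Bo-B=0000F6}; as $X'$ is obtained from $\tilde{X}_{Z}$ by removing the closed subset $D'$, it is an open subvariety and hence again uniformly rational.

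It then remains to deduce the hyperbolic case, which I would present as an instance of the affine one. By construction the hyperbolic modification of $X$ with center $Z$ is the affine modification of $X\times\mathbb{A}^{1}$ along the divisor $X\times\{0\}$ with center $Z\times\{0\}$, so to invoke the previous paragraph I only need that $X\times\mathbb{A}^{1}$ be uniformly rational and that $Z\times\{0\}$ be a smooth center. Both are immediate: any chart $U\subseteq X$ isomorphic to an open subset of $\mathbb{A}^{n}$ gives a chart $U\times\mathbb{A}^{1}$ isomorphic to an open subset of $\mathbb{A}^{n+1}$, and these cover $X\times\mathbb{A}^{1}$, while $Z\times\{0\}\cong Z$ is smooth by hypothesis. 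Since the whole argument merely combines \cite[Proposition 2.6]{Bo-B=0000F6} with the stability of uniform rationality under open immersions and under products with $\mathbb{A}^{1}$, there is no substantial obstacle, which is exactly why the result is stated as an immediate corollary.
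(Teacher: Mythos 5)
Your argument is correct and follows exactly the route the paper intends: the affine modification is an open subvariety of the blow-up along the smooth center, which is uniformly rational by \cite[Proposition 2.6]{Bo-B=0000F6}, and the hyperbolic case reduces to the affine one applied to $X\times\mathbb{A}^{1}$. This matches the paper's own (implicit) justification, which states the result as an immediate corollary of the same facts.
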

\begin{example}
Let $\mathbb{A}^{n}=\mathrm{Spec}(\mathbb{C}[x_{1},\ldots,x_{n}])$
and let $I=(f,g)$ be the defining ideal of a smooth subvariety in
$\mathbb{A}^{n}$. Then the affine modification of $\mathbb{A}^{n}$
with center $I=(f,g)$ and divisor $D=\{f=0\}$ is isomorphic to the
subvariety $X'\subset\mathbb{A}^{n+1}$ defined by the equation: 
\[
\{g(x_{1},\ldots,x_{n})-yf(x_{1},\ldots,x_{n})=0\}\subset\mathbb{A}^{n+1}=\mathrm{Spec}(\mathbb{C}[x_{1},\ldots,x_{n},y]).
\]
 It is a uniformly rational variety.
\end{example}

\subsection{Equivariantly uniformly rational varieties}

Let $G$ be an affine algebraic group and let $X$ be a $G$-variety,
that is an algebraic variety endowed with a $G$-action. We introduce
equivariant versions, of uniform rationality.
\begin{defn}
\label{definition equivariant}Let $X$ be a $G$-variety and $x\in X$. 

i) We say that $X$ is $G$-\emph{linearly rational at the point $x$
}if there exists a $G$-stable open neighborhood $U_{x}$ of $x$,
a linear representation of $G\rightarrow GL_{n}(V)$ and a $G$-stable
open subset $U'\subset V\simeq\mathbb{A}^{n}$ such that $U_{x}$
is equivariantly isomorphic to $U'$.

ii) We say that $X$ is \emph{$G$-rational} \emph{at the point $x$}
if there exists an open $G$-stable neighborhood $U_{x}$ of $x$,
an action of $G$ on $\mathbb{A}^{n}$ and an open $G$-stable subset
$U'\subset\mathbb{A}^{n}$ such that $U_{x}$ is equivariantly isomorphic
to $U'$.

iii) A $G$-variety that is $G$-linearly rational (respectively $G$-rational)
at each point is called \emph{$G$-linearly uniformly rational} (respectively\emph{
$G$-uniformly rational})\emph{.}

iv) A $G$-variety that admits a unique fixed point $x_{0}$ by the
$G$-action is called \emph{$G$-linearly rational} (respectively
$G$-rational) if it is $G$-linearly rational (respectively $G$-rational)
at $x_{0}$.
\end{defn}
$G$-linearly uniformly rational or just $G$-uniformly rational varieties
are always uniformly rational. The converse is trivially false: for
instance the point $[1:0]$ in $\mathbb{P}^{1}$ does not admit any
$\mathbb{G}_{a}$-invariant affine open neighborhood for the action
defined by $t\cdot[u:v]\rightarrow[u+tv:v]$. 

For algebraic tori $\mathbb{T}$, as already mentioned in the introduction,
it is a classical fact that smooth toric varieties are $\mathbb{T}$-linearly
uniformly rational. Moreover, it is known that every effective $\mathbb{T}$-action
on $\mathbb{A}^{n}$ is linearisable for $\mathrm{dim}(\mathbb{T})\geq n-1$
(see \cite{Gu} for $n=2$ and \cite{B} for the general case), and
in another direction every algebraic $\mathbb{G}_{m}$-action on $\mathbb{A}^{3}$
is linearisable \cite{Ka-K-ML-R}. As a consequence, we obtain the
following:
\begin{thm}
\label{GLUR=00003DGUR}For $\mathbb{T}$-varieties of complexity $0$,
$1$ and for $\mathbb{G}_{m}$-threefolds the properties of being
$\mathbb{T}$-linearly uniformly rational and $\mathbb{T}$-uniformly
rational are equivalent.
\end{thm}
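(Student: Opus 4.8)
The plan is to observe that the statement is essentially a matter of combining the definitions with known linearization results. The key point is that the two notions $\mathbb{T}$-linearly rational and $\mathbb{T}$-rational differ only in whether the model $\mathbb{A}^n$ carries a \emph{linear} $\mathbb{T}$-action or merely \emph{some} $\mathbb{T}$-action. Since a $\mathbb{T}$-linear action is a special case of a $\mathbb{T}$-action, the implication "linearly rational $\Rightarrow$ rational" is immediate in full generality, at every point and for every $G$; so the whole content lies in the reverse implication for the three listed classes.

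Let me sketch the reverse implication. Suppose $X$ is a $\mathbb{T}$-variety in one of the three classes and is $\mathbb{T}$-rational at a point $x$. Then there is a $\mathbb{T}$-stable open neighborhood $U_x$ of $x$ equivariantly isomorphic to a $\mathbb{T}$-stable open subset $U'\subset\mathbb{A}^n$ for \emph{some} $\mathbb{T}$-action on $\mathbb{A}^n$. The strategy is to replace this possibly non-linear action by a linear one via the cited linearization theorems: for complexity $0$ and $1$ we have $\dim(\mathbb{T})\geq n-1$, where $n=\dim(X)=\dim(U_x)=\dim(\mathbb{A}^n)$, so every effective $\mathbb{T}$-action on $\mathbb{A}^n$ is linearizable by \cite{Gu,B}; and for $\mathbb{G}_m$-threefolds ($\mathbb{T}=\mathbb{G}_m$, $n=3$) every algebraic $\mathbb{G}_m$-action on $\mathbb{A}^3$ is linearizable by \cite{Ka-K-ML-R}. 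Thus there is an equivariant automorphism $\varphi$ of $\mathbb{A}^n$ (or rather an equivariant isomorphism from $\mathbb{A}^n$ with the given action to $\mathbb{A}^n$ with a linear action) conjugating the given action to a linear one. Transporting $U'$ through $\varphi$ produces a $\mathbb{T}$-stable open subset of $\mathbb{A}^n$ \emph{for the linear action} that is equivariantly isomorphic to $U_x$, which is exactly $\mathbb{T}$-linear rationality at $x$.

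I would carry out the steps in this order: (1) note the trivial implication from the inclusion of linear actions among all actions; (2) for a point $x$ at which $X$ is $\mathbb{T}$-rational, record the dimension count ensuring the hypotheses of the appropriate linearization theorem are met ($\dim\mathbb{T}\geq n-1$ in complexity $0,1$, and $n=3,\mathbb{T}=\mathbb{G}_m$ in the threefold case); (3) invoke linearization to get an equivariant change of coordinates on the model $\mathbb{A}^n$; and (4) compose the isomorphisms to produce the $\mathbb{T}$-stable open subset for the linear action. A small bookkeeping issue to address is that linearization theorems are usually stated for \emph{effective} actions, so one should either reduce to the effective case by passing to the quotient torus acting faithfully, or simply note that in the relevant cases the faithful action on $X$ induces a faithful action on the open chart $\mathbb{A}^n$.

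The main obstacle I anticipate is less a mathematical difficulty than a matter of correctly aligning hypotheses: the linearization statements apply to actions on $\mathbb{A}^n$ itself, whereas $\mathbb{T}$-rationality only gives an action on $\mathbb{A}^n$ together with an invariant \emph{open subset} $U'$ identified with $U_x$. The subtle step is that linearizing the ambient action on $\mathbb{A}^n$ automatically transports the invariant open subset to another invariant open subset, so no additional argument is needed to control the chart; one only needs that the equivariant isomorphism of $\mathbb{A}^n$ carries $\mathbb{T}$-stable opens to $\mathbb{T}$-stable opens, which is automatic. Hence the proof should be short once the dimension bookkeeping and the effectiveness reduction are handled cleanly.
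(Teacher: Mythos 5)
Your proposal is correct and follows exactly the route the paper intends: the paper derives this theorem directly as a consequence of the cited linearization results (Gutwirth and Bia\l ynicki-Birula for $\dim(\mathbb{T})\geq n-1$, i.e.\ complexity $0$ and $1$, and Kaliman--Koras--Makar-Limanov--Russell for $\mathbb{G}_m$-actions on $\mathbb{A}^3$), giving no further argument. Your added bookkeeping on effectiveness and on transporting the stable open chart through the linearizing automorphism fills in precisely the details the paper leaves implicit.
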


\subsection{\label{sec:2}Hyperbolic $\mathbb{G}_{m}$-actions on smooth varieties}

By a Theorem of Sumihiro (see \cite{Su}) every normal $\mathbb{G}_{m}$-variety
$X$ admits a cover by affine $\mathbb{G}_{m}$-stable open subsets.
This reduces the study of $\mathbb{G}_{m}$-linearly uniformly rational
varieties to the affine case. Recall that the coordinate ring $A$
of an affine $\mathbb{G}_{m}$-variety $X$ is $\mathbb{Z}$-graded
in a natural way by its subspaces $A_{n}:=\left\{ f\in A/f(\lambda\cdot x)=\lambda^{n}f(x),\forall\lambda\in\mathbb{G}_{m}\right\} $
of semi-invariants of weight $n$. In particular $A_{0}$ is the ring
of invariant functions on $X$. If $X$ is smooth with positively
graded coordinate ring, then by \cite{Kam-R}, $X$ has the structure
of a vector bundle over its fixed point locus $X^{\mathbb{G}_{m}}$,
and hence the question whether $X$ is $\mathbb{G}_{m}$-linearly
uniformly rational becomes intimately related to the uniform rationaly
of $X^{\mathbb{G}_{m}}$. In this subsection, we consequently focus
on \emph{hyperbolic} $\mathbb{G}_{m}$-actions. We summarize the correspondence
between smooth affine varieties $X$ endowed with an effective hyperbolic
$\mathbb{G}_{m}$-action and pairs $(Y,\mathcal{D})$ where $Y$ is
a variety, that we call \emph{A-H quotient}, and $\mathcal{D}$ is
a so-called \emph{segmental divisor} on $Y$. All the definitions
and constructions are adapted from \cite{A-H}. 
\begin{defn}
A $\mathbb{G}_{m}$-action is said to be \emph{hyperbolic} if there
is at least one $n_{1}<0$ and one $n_{2}>0$ such that $A_{n_{1}}$
and $A_{n_{2}}$ are nonzero. 
\end{defn}
\noindent
\begin{defn}
\label{The-A-H-quotient} Let $X=\mathrm{Spec}(A)$ be a smooth affine
variety equipped with a hyperbolic $\mathbb{G}_{m}$-action.

i) We denote by $q:X\rightarrow Y_{0}(X):=X/\!/\mathbb{G}_{m}=\mathrm{Spec}(A_{0})$
the \emph{categorical quotient} of $X$.

ii) The A-H quotient $Y(X)$ of $X$ is the blow-up $\pi:Y(X)\rightarrow Y_{0}(X)$
of $Y_{0}(X)$ with center at the closed subscheme defined by the
ideal $\mathcal{I}=\left\langle A_{d}\cdot A_{-d}\right\rangle $,
where $d>0$ is chosen such that $\bigoplus_{n\in\mathbb{Z}}A_{dn}$
is generated by $A_{0}$ and $A_{\pm d}$. It is a normal \emph{semi-projective}
variety (see \cite{A-H}). By virtue of \cite[Theorem 1.9,  proposition 1.4]{T},
$Y(X)$ is isomorphic to the fiber product of the schemes $Y_{\pm}(X)=\mathrm{Proj}_{A_{0}}(\bigoplus_{n\in\mathbb{Z}_{\geq0}}A_{\pm n})$
over $Y_{0}(X)$.
\end{defn}
In the remainder of the article, we use the notation $\pi:\tilde{V}_{I}\rightarrow V$
to refer to the blow-up of an affine variety $V$ with center at the
closed sub-scheme defined by the ideal $I\subset\Gamma(V,\mathcal{O}_{V})$. 
\begin{defn}
A \emph{segmental divisor} $\mathcal{D}$ on a normal algebraic variety
$Y$ is a formal finite sum $\mathcal{D}=\sum[a_{i},b_{i}]\otimes D_{i}$,
where $D_{i}$ are prime Weil divisors on $Y$ and $[a_{i},b_{i}]$
are closed intervals with rational bounds $a_{i}\leq b_{i}$.

The set of all closed intervals with rational bounds, admits a structure
of abelian semigroup for the \emph{Minkowski sum}, the Minkowski sum
of two intervals $[a_{i},b_{i}]$ and $[a_{j},b_{j}]$ being the interval
$[a_{i}+a_{j},b_{i}+b_{j}]$.
\end{defn}
Every element $n\in\mathbb{Z}$ determines a map from segmental divisors
to the group of Weil $\mathbb{Q}$-divisors on $Y$:

\[
\mathcal{D}=\sum[a_{i},b_{i}]\otimes D_{i}\rightarrow\mathcal{D}(n)=\sum q_{i}D_{i},
\]
 where for all $i$, $q_{i}\in\mathbb{Q}$ is the minimum of $na_{i}$
and $nb_{i}$. 
\begin{defn}
\label{A-porper-segmental-divisor,}A \emph{proper-segmental divisor
}(ps-divisor) $\mathcal{D}$ on a variety $Y$ is a segmental divisor
on $Y$ such that for every $n\in\mathbb{Z}$, $\mathcal{D}(n)$ satisfies
the following properties:

1) $\mathcal{D}(n)$ is a $\mathbb{Q}$-Cartier divisor on $Y$.

2) $\mathcal{D}(n)$ is semi-ample, that is, for some $p\in\mathbb{Z}_{>0}$,
$Y$ is covered by complements of supports of effective divisors linearly
equivalent to $\mathcal{D}(pn)$.

3) $\mathcal{D}(n)$ is big, that is, for some $p\in\mathbb{Z}_{>0}$,
there exists an effective divisor $D$ linearly equivalent to $\mathcal{D}(pn)$
such that $Y\setminus\mathrm{Supp}(D)$ is affine. 
\end{defn}
In the particular case of hyperbolic $\mathbb{G}_{m}$-action, the
main Theorem of \cite{A-H} can be reformulated as follows:
\begin{thm}
For any ps-divisor $\mathcal{D}$ on a normal semi-projective variety
$Y$ the scheme

\[
\mathbb{S}(Y,\mathcal{D})=\mathrm{Spec}(\bigoplus_{n\in\mathbb{Z}}\Gamma(Y,\mathcal{O}_{Y}(\mathcal{D}(n))))
\]
is a normal affine variety of dimension $\mathrm{dim}(Y)+1$ endowed
with an effective hyperbolic $\mathbb{G}_{m}$-action, whose A-H quotient
$Y(\mathbb{S}(Y,\mathcal{D}))$ is birationally isomorphic to $Y$.
Conversely any normal affine variety $X$ endowed with an effective
hyperbolic $\mathbb{G}_{m}$-action is isomorphic to $\mathbb{S}(Y(X),\mathcal{D})$
for a suitable ps-divisor $\mathcal{D}$ on $Y(X)$.
\end{thm}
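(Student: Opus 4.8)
The plan is to deduce this statement from the general Altmann--Hausen correspondence of \cite{A-H}, specialized to the rank-one torus $\mathbb{T}=\mathbb{G}_{m}$, so that most of the argument consists in translating the general polyhedral formalism into the concrete dictionary of segmental divisors. For $\mathbb{G}_{m}$ one has $M=N=\mathbb{Z}$ and $M_{\mathbb{Q}}=N_{\mathbb{Q}}=\mathbb{Q}$, so the polyhedra occurring as coefficients of an Altmann--Hausen polyhedral divisor are exactly the closed intervals of $\mathbb{Q}$; requiring all of them to be \emph{bounded} (honest segments $[a_{i},b_{i}]$ rather than half-lines) amounts to imposing that the tail cone be trivial, which is precisely the condition producing both positive and negative weights, that is, a hyperbolic action. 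Under this identification the evaluation $u\mapsto\mathcal{D}(u)$ of the general theory becomes the map $n\mapsto\mathcal{D}(n)=\sum_{i}\min(na_{i},nb_{i})D_{i}$ of the excerpt, and the three clauses $\mathbb{Q}$-Cartier, semi-ample and big of Definition \ref{A-porper-segmental-divisor,} are the specializations of the properness conditions defining a pp-divisor. So the first task is to record this dictionary carefully.

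For the forward implication I would first check that $A=\bigoplus_{n}\Gamma(Y,\mathcal{O}_{Y}(\mathcal{D}(n)))$ is a well-defined $\mathbb{Z}$-graded $\mathbb{C}$-algebra. The key point is that each coefficient function $n\mapsto\min(na_{i},nb_{i})$ is concave and positively homogeneous of degree one, hence superadditive, giving $\mathcal{D}(n)+\mathcal{D}(m)\leq\mathcal{D}(n+m)$ as $\mathbb{Q}$-divisors; combined with superadditivity of the round-down this ensures $\Gamma(\mathcal{O}_{Y}(\mathcal{D}(n)))\cdot\Gamma(\mathcal{O}_{Y}(\mathcal{D}(m)))\subseteq\Gamma(\mathcal{O}_{Y}(\mathcal{D}(n+m)))$, so the multiplication is internal. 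Finite generation of $A$, and the fact that $\mathrm{Spec}(A)$ is normal, are the technical core inherited from \cite{A-H}: semi-projectivity of $Y$ (projectivity over the affine $Y_{0}=\mathrm{Spec}(A_{0})$) together with the semi-ampleness and bigness of the $\mathcal{D}(n)$ force the section algebra to be finitely generated, and normality follows since each graded piece is a space of sections and is therefore integrally closed in the function field. The $\mathbb{Z}$-grading is by construction the one induced by a $\mathbb{G}_{m}$-action, which is hyperbolic because the boundedness of the intervals makes both $A_{n}$ and $A_{-n}$ nonzero for $n\gg0$. The dimension count $\dim\mathbb{S}(Y,\mathcal{D})=\dim Y+1$ follows from bigness: for $n$ large the sections of $\mathcal{O}_{Y}(\mathcal{D}(n))$ separate points on a dense open of $Y$, so the general $\mathbb{G}_{m}$-orbit is one-dimensional over a quotient birational to $Y$.

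For the converse I would start from $X=\mathrm{Spec}(A)$ with $A=\bigoplus_{n}A_{n}$ hyperbolic, form $Y=Y(X)$ as the A-H quotient of Definition \ref{The-A-H-quotient}, that is, the fiber product over $Y_{0}=\mathrm{Spec}(A_{0})$ of the two projective spectra $Y_{\pm}=\mathrm{Proj}_{A_{0}}(\bigoplus_{n\geq0}A_{\pm n})$. On $Y_{+}$ (resp. $Y_{-}$) the homogeneous pieces $A_{n}$ (resp. $A_{-n}$) are recovered as global sections of the twisting sheaves, and the segmental divisor $\mathcal{D}=\sum[a_{i},b_{i}]\otimes D_{i}$ is read off by letting the left endpoint $a_{i}$ record the order of vanishing along $D_{i}$ coming from the negatively graded part and the right endpoint $b_{i}$ the one coming from the positively graded part; the $D_{i}$ run over the prime divisors in the fibers over $Y_{0}$ where the two sides disagree. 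One then verifies $A_{n}\cong\Gamma(Y,\mathcal{O}_{Y}(\mathcal{D}(n)))$ for all $n$, which is exactly the statement of \cite{A-H} transported through the fiber-product description of $Y$ supplied by \cite{T}.

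The step I expect to be the main obstacle is not the algebraic bookkeeping but the two places where the general machinery does real work: establishing finite generation and normality of $\mathbb{S}(Y,\mathcal{D})$ in the forward direction, which genuinely needs the properness hypotheses and semi-projectivity and is where one must invoke \cite{A-H} rather than argue by hand; and, in the converse, correctly extracting the interval endpoints of $\mathcal{D}$ while accounting for the fact that the A-H quotient is recovered only \emph{birationally}. This last point is the subtle one: distinct segmental divisors on birational models of $Y$ can yield isomorphic $\mathbb{S}(Y,\mathcal{D})$, so the assignment $X\mapsto(Y,\mathcal{D})$ becomes canonical only after fixing the model $Y(X)$ built in Definition \ref{The-A-H-quotient}, and checking that this particular choice reproduces $A$ on the nose is where care is required.
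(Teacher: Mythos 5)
Your proposal is correct and takes essentially the same route as the paper, which states this theorem without proof as the rank-one specialization of the main theorem of \cite{A-H} (segments are exactly the polyhedra with trivial tail cone, the three properness conditions translate the pp-divisor axioms, and the quotient is recovered via the fiber-product description from \cite{T}), so the real content is the dictionary you spell out plus the citation. The only quibble is a sign convention in your converse: with the paper's normalization $\mathcal{D}(1)=\sum a_{i}D_{i}$ and $\mathcal{D}(-1)=\sum(-b_{i})D_{i}$, the left endpoints $a_{i}$ are read off from the \emph{positively} graded part and the right endpoints $b_{i}$ from the \emph{negatively} graded part, the opposite of what you wrote.
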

\begin{rem}
Alternatively, see \cite{D,F-Z}, any finitely generated $\mathbb{Z}$-graded
algebra $A$ can be written in the form 

\[
A=\bigoplus_{n<0}\Gamma(Y,\mathcal{O}_{Y}(nD_{-}))\oplus\Gamma(Y,\mathcal{O}_{Y})\oplus\bigoplus_{n>0}\Gamma(Y,\mathcal{O}_{Y}(nD_{+}))
\]
where $(Y,D_{+},D_{-})$ is a triple consisting in a normal variety
$Y$ and suitable $\mathbb{Q}$-divisors $D_{+}$ and $D_{-}$ on
it. These two presentations are obtained from each other by setting
$D_{-}=\mathcal{D}(-1)$, $D_{+}=\mathcal{D}(1)$ and conversely $\mathcal{D}=\{1\}D_{+}+[0,1](-D_{-}-D_{+})$.
\end{rem}
\noindent
\begin{rem}
\label{ps-divisor construction}A method to determine a possible ps-divisor
$\mathcal{D}$ such that $X\simeq\mathbb{S}(Y,\mathcal{D})$ is to
embed $X$ as a $\mathbb{G}_{m}$-stable subvariety of an affine toric
variety (see \cite[section 11]{A-H}). The calculation is then reduced
to the toric case by considering an embedding in $\mathbb{A}^{n}$
endowed with a linear action of a torus $\mathbb{T}$ of sufficiently
large dimension $n$. The inclusion of $\mathbb{G}_{m}\hookrightarrow\mathbb{T}$
corresponds to an inclusion of the lattice $\mathbb{Z}$ of one parameter
subgroups of $\mathbb{G}_{m}$ in the lattice $\mathbb{Z}^{n}=N$
of one parameter subgroups of $\mathbb{T}$. We obtain the exact sequence:
\[
\xymatrix{0\ar[r] & \mathbb{Z}\ar[r]_{F} & N=\mathbb{Z}^{n}\ar[r]_{P}\ar@/_{1pc}/[l]_{s} & \overline{N}=\mathbb{Z}^{n}/\mathbb{Z}\ar[r] & 0}
,
\]
 where $F$ is given by the induced action of $\mathbb{G}_{m}$ on
$\mathbb{A}^{n}$ and $s$ is a section of $F$. Let $v_{i}$, for
$i=1,\ldots,n$, be the first integral vectors of the unidimensional
cones generated by the i-th column vectors of $P$ considered as rays
in the lattice $\overline{N}\simeq\mathbb{Z}^{n-1}$. Let $Z$ be
the toric variety of dimension $\mathrm{dim}(\mathbb{A}^{n})-\mathrm{dim}(\mathbb{T})$,
determined by the fan in $\overline{N}$ whose cones are generated
the $v_{i}$ for $i=1,\ldots,n$. Then each $v_{i}$ corresponds to
a $\mathbb{T}'$-invariant divisor where $\mathbb{T}'=\mathrm{Spec}(\mathbb{C}[\overline{N}^{\vee}]$.
By \cite[section 11]{A-H} $Z$ contains the A-H quotient of $X$
as a closed subset, and the support of $D_{i}$ is obtained by restricting
the $\mathbb{T}'$-invariant divisor corresponding to $v_{i}$ to
$Y$. If $X$ is the affine space endowed with a linear action of
$\mathbb{G}_{m}$, then $Z$ is itself the A-H quotient of $\mathbb{A}^{n}$.
The segment associated to the divisor $D_{i}$ is equal to $s(\mathbb{R}_{\geq0}^{n}\cap P^{-1}(v_{i}))$.
The section $s$ can further be chosen so that the number of non zero
coefficients in the associated matrix is minimal. The ps-divisor $\mathcal{D}$
from a such section will be called \emph{minimal}. We would like to
point out that this notion is more restrictive than that given in
\cite{A-H}, in particular every minimal ps-divisor in our sense is
also in the sense of \cite{A-H}.
\end{rem}

\section{Algebro-combinatorial criteria for $\mathbb{G}_{m}$-linear rationality}

Given a a smooth rational variety $X$ endowed with a hyperbolic $\mathbb{G}_{m}$-action
which admits a unique fixed point $x_{0}$, we develop in this section
a method to test whether $X$ is $\mathbb{G}_{m}$-rational. 
\begin{defn}
\cite[Definition 8.3]{A-H} Let $Y$ and $Y'$ be normal semi-projective
varieties and let $\mathcal{D}'=\sum[a'_{i},b'_{i}]\otimes D'_{i}$
and $\mathcal{D}=\sum[a{}_{i},b{}_{i}]\otimes D_{i}$ be ps-divisors
on $Y'$ and $Y$ respectively.

i) Let $\varphi:Y\rightarrow Y'$ be a morphism such that $\varphi(Y)$
is not contained in $\mathrm{Supp}(D'_{i})$ for any $i$. The \emph{polyhedral
pull-back} of $\mathcal{D}'$ is defined by $\varphi^{*}(\mathcal{D}'):=\sum[a'_{i},b'_{i}]\otimes\varphi^{*}(D'_{i})$,
where $\varphi^{*}(D'_{i})$ is the usual pull-back of $D'_{i}$. 

ii) Let $\varphi:Y\rightarrow Y'$ be a proper dominant map. The \emph{polyhedral
push-forward of $\mathcal{D}$ }is defined by $\varphi_{*}(\mathcal{D}):=\sum[a{}_{i},b{}_{i}]\otimes\varphi_{*}(D{}_{i})$,
where $\varphi_{*}(D{}_{i})$ is the usual push-forward of $D_{i}$.
\end{defn}
\noindent

Let $\varphi:Y\rightarrow Y'$ be a birational morphism and let $D'$
be a divisor on $Y'$, then we decompose the pull-back of $D'$ by
$\varphi$ as follows: $\varphi^{*}(D')=(\varphi^{-1})_{*}(D')+R$
where $(\varphi^{-1})_{*}(D')$ is the strict transform of $D'$ and
$R$ is supported in the exceptional locus of $\varphi$.
\begin{defn}
\label{pars equivalence}Two pairs $(Y_{i},D_{i})$, $i=1,2$ consisting
of a variety $Y_{i}$ and a Cartier divisor $D_{i}$ on $Y_{i}$ are
called \emph{birationally equivalent }if there exist a variety $Z$,
and two proper birational morphisms $\varphi_{i}:Z\rightarrow Y_{i}$
such that the strict transforms $(\varphi_{1}^{-1})_{*}(D_{1})$ and
$(\varphi_{2}^{-1})_{*}(D_{2})$ of $D_{1}$ and $D_{2}$ respectively
are equal. For ps-divisors, we extend this notion in the natural way
to pairs $(Y_{i},\mathcal{D}_{i})$ consisting of a semi-projective
variety $Y_{i}$ and a ps-divisor $\mathcal{D}_{i}$ on $Y_{i}$ using
the polyhedral pull-back defined above.
\end{defn}
\noindent

Since we consider hyperbolic $\mathbb{G}_{m}$-actions with a unique
fixed point, the construction of the A-H quotient $Y$ as in Definition
\ref{The-A-H-quotient} ensures that $Y(X)$ has only one exceptional
divisor $E$ over $Y_{0}(X)$. We denote by $\hat{\mathcal{D}}$ the
segmental divisor obtain from the ps-divisor $\mathcal{D}$ corresponding
to $X$ by removing all irreducible components whose support does
not intersect $E$. The following example illustrate a situation for
which $\hat{\mathcal{D}}\neq\mathcal{D}$. 
\begin{example}
Let $S$ be the affine surface defined by $\{x^{2}y+x=z^{2}\}\subset\mathbb{A}^{3}=\mathrm{Spec}(\mathbb{C}[x,y,z])$
and let $X:=S\times\mathbb{A}^{1}$ be the cylinder over $S$, endowed
with the hyperbolic $\mathbb{G}_{m}$-action induced by the linear
one $\lambda(x,y,z,t)\rightarrow(\lambda^{6}x,\lambda^{-6}y,\lambda^{3}z,\lambda^{2}t)$
on $\mathbb{A}^{4}=\mathrm{Spec}(\mathbb{C}[x,y,z,t])$. Using the
method described in Remark \ref{ps-divisor construction}, we find
that $X$ is equivariantly isomorphic to $\mathbb{S}(\tilde{\mathbb{A}}_{(u,v)}^{2},\mathcal{D})$
with:
\end{example}
\[
\mathcal{D}=\left\{ \frac{1}{2}\right\} D_{1}+\left\{ \frac{1}{2}\right\} D_{2}-\left\{ \frac{1}{3}\right\} D_{3}+\left[0,\frac{1}{6}\right]E,
\]

\begin{flushleft}
where $E$ is the exceptional divisor of the blow-up $\pi:\tilde{\mathbb{A}}_{(u,v)}^{2}\rightarrow\mathbb{A}^{2}\simeq\mathrm{Spec}(\mathbb{C}[u,v])\simeq\mathrm{Spec}(\mathbb{C}[yt^{3},yx])$
, and where $D_{1}$, $D_{2}$ and $D_{3}$ are the strict transforms
of the curves $L_{1}=\left\{ v=0\right\} $, $L_{2}=\left\{ 1+v=0\right\} $
$L_{3}=\left\{ u=0\right\} $ in $\mathbb{A}^{2}=\mathrm{Spec}(\mathbb{C}[u,v])$.
The divisor $D_{2}$ does not intersect the exceptional divisor $E$,
so: 
\[
\hat{\mathcal{D}}=\left\{ \frac{1}{2}\right\} D_{1}-\left\{ \frac{1}{3}\right\} D_{3}+\left[0,\frac{1}{6}\right]E.
\]
\par\end{flushleft}
\begin{thm}
\label{Theorem equivalence}Let $X$ be a smooth affine rational variety
endowed with a hyperbolic $\mathbb{G}_{m}$-action with a unique fixed
point $x_{0}$. Then $X$ is $\mathbb{G}_{m}$-rational if and only
if the following holds:

1) There exists pairs $(Y,\mathcal{D})$ and $(Y',\mathcal{D}')$
such that $\mathbb{S}(Y,\mathcal{D})$ is equivariantly isomorphic
to $X$ and $\mathbb{S}(Y',\mathcal{D}')$ is equivariantly isomorphic
to $\mathbb{A}^{n}$ endowed with a hyperbolic $\mathbb{G}_{m}$-action.

2) The pairs $(Y,\hat{\mathcal{D}})$ and $(Y',\hat{\mathcal{D}'})$
are birationally equivalent.
\end{thm}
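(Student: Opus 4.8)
The plan is to prove the two implications separately, using the correspondence between smooth affine varieties with hyperbolic $\mathbb{G}_{m}$-actions and pairs $(Y,\mathcal{D})$, together with the observation that since $X$ has a unique fixed point, the A-H quotient $Y(X)$ carries a single exceptional divisor $E$ over the categorical quotient $Y_{0}(X)=\operatorname{Spec}(A_{0})$. The key conceptual point is that $\mathbb{G}_{m}$-rationality of $X$ at the fixed point $x_{0}$ means $X$ contains a $\mathbb{G}_{m}$-invariant open neighborhood of $x_{0}$ equivariantly isomorphic to a $\mathbb{G}_{m}$-invariant open subset $U'\subset\mathbb{A}^{n}$; I want to translate the existence of such an equivariant open embedding into a statement purely about the A-H data, namely the birational equivalence of the pairs $(Y,\hat{\mathcal{D}})$ and $(Y',\hat{\mathcal{D}'})$ in the sense of Definition \ref{pars equivalence}.

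For the forward direction, I would start from a $\mathbb{G}_{m}$-equivariant open embedding $j\colon U_{x_{0}}\hookrightarrow U'\subset\mathbb{A}^{n}$ with $x_{0}\mapsto$ the fixed point of the $\mathbb{G}_{m}$-action on $\mathbb{A}^{n}$. First I would note that such a $U'$ must itself be the $\mathbb{S}$-variety attached to some ps-divisor on the A-H quotient $Y'$ of $\mathbb{A}^{n}$, giving the pair $(Y',\mathcal{D}')$ of condition 1; and $U_{x_{0}}$, being a $\mathbb{G}_{m}$-stable open subset of $X$ containing the unique fixed point, shares the same A-H quotient and ps-divisor data near $E$ as $X$, which yields $(Y,\mathcal{D})$. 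The heart of the argument is then to show that an equivariant isomorphism $U_{x_{0}}\simeq U'$ descends to a birational map between the A-H quotients that matches the segmental divisors restricted to the exceptional loci. I would pass to the common resolution $Z$ dominating both $Y$ and $Y'$ via proper birational morphisms $\varphi_{1},\varphi_{2}$ and use the functoriality of the $\mathbb{S}$-construction under polyhedral pull-back to check that the strict transforms of the relevant divisors agree. The reason only $\hat{\mathcal{D}}$ enters — rather than the full $\mathcal{D}$ — is precisely that components of $\mathcal{D}$ disjoint from $E$ record data away from the fixed point, which can be modified freely by shrinking the invariant neighborhood $U_{x_{0}}$ without affecting the germ at $x_{0}$; this is the content of the preceding example and must be invoked carefully.

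For the converse, I would assume condition 2, so there is a variety $Z$ with proper birational morphisms $\varphi_{i}\colon Z\to Y,Y'$ making the strict transforms of the supports of $\hat{\mathcal{D}}$ and $\hat{\mathcal{D}'}$ coincide. The goal is to manufacture a $\mathbb{G}_{m}$-invariant open neighborhood of $x_{0}$ in $X$ equivariantly isomorphic to a $\mathbb{G}_{m}$-invariant open subset of $\mathbb{A}^{n}$. Here I would pull both ps-divisors back to $Z$, compare the resulting $\mathbb{S}$-varieties over a common invariant open locus surrounding the exceptional divisor, and use that the $\mathbb{S}$-construction is insensitive — up to equivariant isomorphism over a suitable invariant open set — to the components of the ps-divisor whose supports have been removed in passing to $\hat{\mathcal{D}}$. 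By semi-projectivity of the A-H quotient and the covering property of semi-ample big divisors in Definition \ref{A-porper-segmental-divisor,}, an affine invariant open chart around $x_{0}$ is recovered, and the matching of strict transforms on $Z$ provides the equivariant identification of the two germs.

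The main obstacle, I expect, is the bookkeeping in this last identification: one must verify that removing the components of $\mathcal{D}$ disjoint from $E$ genuinely corresponds to restricting to a $\mathbb{G}_{m}$-stable open neighborhood of the fixed point on the variety side, and that the equivariant isomorphism type of this neighborhood depends only on the birational-equivalence class of the pair near $E$. Making this precise requires controlling how the grading pieces $\Gamma(Y,\mathcal{O}_{Y}(\mathcal{D}(n)))$ change under both the deletion of far-away divisorial components and the proper birational modifications $\varphi_{i}$; the delicate point is ensuring that these two operations commute up to the equivariant localization defining $U_{x_{0}}$, so that a single coherent equivariant isomorphism of germs results rather than merely a birational one.
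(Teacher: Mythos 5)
Your overall strategy --- reduce everything to the A-H quotient and observe that only the part of $\mathcal{D}$ meeting the exceptional divisor $E$ is relevant near the fixed point --- matches the paper's, but the proposal leaves out the three steps that actually make the argument work. First, the paper reduces to \emph{principal} invariant open subsets: $U_{x_{0}}$ is the complement of the zero set of finitely many semi-invariants, one of which, say $f$, does not vanish at $x_{0}$, and the factoriality of $\mathbb{A}^{n}$ lets one cut $\varphi(X_{f})$ down to a principal open set $\mathbb{A}^{n}_{f'}$. This reduction is what makes the localization result \cite[Proposition 3.3]{A-H-S=0000FC} applicable, i.e.\ what lets you describe the open embedding $X_{f}\hookrightarrow X$ by the localized pair $(Y_{f},\mathcal{D}_{f})$. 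Second, and crucially, a semi-invariant $f$ with $f(x_{0})\neq 0$ is automatically invariant (evaluating $\lambda\cdot f$ at the fixed point shows its weight is $0$); this is what identifies $f$ with a function on $Y_{0}$ not vanishing at $q(x_{0})$, hence with a function on $Y$ whose zero locus misses $E$, since $E$ lies over $q(x_{0})$. Without this observation your claim that components of $\mathcal{D}$ disjoint from $E$ ``can be modified freely by shrinking the invariant neighborhood'' is unsupported: you need to know that every sufficiently small invariant neighborhood of $x_{0}$ is of the form $X_{f}$ with $V(f)$ disjoint from $E$ on the quotient side, which is exactly what weight-zero-ness buys.

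Third, in the forward direction you posit a common resolution $Z$ dominating $Y$ and $Y'$ with matching strict transforms, but that is the conclusion, not something you can start from; you never explain how the equivariant isomorphism produces it. The paper instead invokes the uniqueness statement \cite[Corollary 8.12.]{A-H}: the equivariant isomorphism $X_{f}\simeq\mathbb{A}^{n}_{f'}$ yields a normal semi-projective $Y''$ with proper birational morphisms $\sigma_{1}:Y_{f}\to Y''$ and $\sigma_{2}:Y'_{f'}\to Y''$ and a ps-divisor $\mathcal{D}''$ pulling back to $\mathcal{D}_{f}$ and $\mathcal{D}'_{f'}$; one then argues that $\sigma_{i}$ cannot contract the unique exceptional divisor (otherwise $\mathbb{S}(Y'',\mathcal{D}'')$ would not reproduce $X_{f}$, by the construction of the A-H quotient in Definition \ref{The-A-H-quotient}), so both $\sigma_{i}$ are isomorphisms, $(Y_{f},\mathcal{D}_{f})\simeq(Y'_{f'},\mathcal{D}'_{f'})$, and the birational equivalence of $(Y,\hat{\mathcal{D}})$ and $(Y',\hat{\mathcal{D}'})$ follows. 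The converse is then comparatively formal (arrange the birational map to restrict to an isomorphism $Y_{g}\simeq Y'_{g'}$ with $V(g)$, $V(g')$ disjoint from the exceptional divisors, and read the isomorphism back through the $\mathbb{S}$-construction), but your version of it again leans on the unproved ``insensitivity'' of $\mathbb{S}(\cdot)$ to far-away components, which is really the weight-zero and localization argument above. As it stands the proposal is a correct outline of intent with the decisive technical content missing.
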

\begin{proof}
Suppose that $X$ is $\mathbb{G}_{m}$-rational so that there exists
an open $\mathbb{G}_{m}$-stable neighborhood $U_{x_{0}}$ of $x_{0}$,
an action of $\mathbb{G}_{m}$ on $\mathbb{A}^{n}$, an open $\mathbb{G}_{m}$-stable
subvariety $U'\subset\mathbb{A}^{n}$, and an equivariant isomorphism
$\varphi:U_{x_{0}}\rightarrow U'$. We can always reduce to the case
where $U_{x_{0}}$ and $U'$ are principal open sets. Indeed $U_{x_{0}}$
is the complement of a closed stable subvariety of $X$ determined
by an ideal $\mathcal{I}=(f_{0},\ldots,f_{k})$ where each $f_{i}\in\Gamma(X,\mathcal{O}_{X})$
is semi-invariant. As $U_{x_{0}}$ contains $x_{0}$, at least one
of the $f_{i}$ does not vanish at $x_{0}$. Denoting this function
by $f$, the principal open subset $X_{f}:=X\setminus V(f)$ is contained
in $U_{x_{0}}$. The restriction of $\varphi$ to $X_{f}$ induces
an isomorphism between $X_{f}$ and $\varphi(X_{f})$. This yields
a divisor $U'\setminus\varphi(X_{f})$ on $U'$. Since $\mathbb{A}^{n}$
is factorial, this divisor is the restriction of a principal divisor
$\mathrm{Div}(f')$ on $\mathbb{A}^{n}$ for a certain regular function
$f'$. By construction $\varphi$ induces an equivariant isomorphism
between $X_{f}$ and $\mathbb{A}_{f'}^{n}$.

Note that any non-constant semi-invariant function $f\in\Gamma(X,\mathcal{O}_{X})$
such that $f(x_{0})\neq0$ is actually invariant. Indeed, letting
$w$ be the weight of $f$, we have $\lambda\cdot f(x_{0})=\lambda^{w}f(x_{0})=f(\lambda^{-1}\cdot x_{0})=f(x_{0})$
for all $\lambda\in\mathbb{G}_{m}$, and so $w=0$. 

Let $(Y,\mathcal{D})$ be the pair corresponding to $X$ with $\mathcal{D}$
minimal in the sense defined in Remark \ref{ps-divisor construction}.
We can identify every invariant function $f$ on $X$ non vanishing
at $x_{0}$ with an element $f$ of $\Gamma(Y,\mathcal{O}_{Y})$ such
that $V(f)\subset Y$ does not contain any irreducible component of
$\mathrm{Supp}(\hat{\mathcal{D}})$. Indeed, every such invariant
function corresponds via the algebraic quotient morphism $q:X\rightarrow Y_{0}$
to a function on $\Gamma(Y_{0},\mathcal{O}_{Y_{0}})$ which does not
vanish at $q(x_{0})$. Since the center of the blow-up $\pi:Y\rightarrow Y_{0}$
is supported by $q(x_{0})$ we can in turn identify $f$ with a regular
function on $Y$. We denote by $Y_{f}$ the corresponding open subset
of $Y$ where $f$ does not vanish, so that with our assumption $Y(X_{f})=Y_{f}$. 

By virtue of \cite[proposition 3.3]{A-H-S=0000FC}, for a ps-divisor
$\mathcal{D}$ on a normal, semiprojective variety $Y$ , let $\mathcal{D}_{f}$
be the localization of $\mathcal{D}$ by $f$. Then $\mathcal{D}_{f}$
is a ps-divisor on $Y_{f}$, and the canonical map $\mathcal{D}_{f}\rightarrow D$
describes the open embedding $X_{f}\rightarrow X$.

Denoting $i:Y_{f}\hookrightarrow Y$ the canonical open embedding,
we said that the pair $(Y_{f},\mathcal{D}_{f}=i^{*}(\mathcal{D}))$
describes the equivariant open embedding $j:X_{f}\simeq\mathbb{S}(Y_{f},i^{*}(\mathcal{D}))\hookrightarrow X$,
and we have the following diagram:

\[
\xymatrix{X_{f}\ar@{^{(}->}[rr]^{j}\ar[d]_{q} &  & X=\mathbb{S}(Y,\mathcal{D})\ar[d]_{q}\\
X_{f}/\!/\mathbb{G}_{m}=Y{}_{0,f}\ar@{^{(}->}[rr] &  & Y_{0}=X/\!/\mathbb{G}_{m}\\
Y_{f}\ar@{^{(}->}[rr]^{i}\ar[u]_{\pi_{\mid Y'}} &  & Y=Bl_{I}(Y_{0}).\ar[u]_{\pi}
}
\]

\begin{flushleft}
A similar description holds for the principal invariant open subset
$\mathbb{A}_{f'}^{n}$ of $\mathbb{A}^{n}$ endowed with a hyperbolic
$\mathbb{G}_{m}$-action. We denote the A-H quotient $Y(\mathbb{A}_{f'}^{n})$
of $\mathbb{A}_{f'}^{n}$ simply by $Y'_{f'}$ and the corresponding
ps-divisor by $\mathcal{D}'_{f'}$.
\par\end{flushleft}
\begin{flushleft}
By \cite[Corollary 8.12.]{A-H} $X_{f}$ and $\mathbb{A}_{f'}^{n}$
are equivariantly isomorphic if and only if there exist a normal semi-projective
variety $Y''$, birational morphisms $\sigma_{1}:Y{}_{f}\rightarrow Y''$
and $\sigma_{2}:Y'_{f'}\rightarrow Y''$ and a ps-divisor $\mathcal{D}''$
on $Y''$such that $\mathcal{D}\cong\sigma_{1}^{*}(\mathcal{D}'')$
and $\mathcal{D}'_{f'}\cong\sigma_{2}^{*}(\mathcal{D}'')$. Since
$\sigma_{1}$ is projective and birational, it either contracts the
unique exceptional divisor $E$ of $Y_{f}$ over $Y_{0,f}$, or it
is an isomorphism. But if $\sigma_{1}$ contracts $E$ then $\mathbb{S}(Y'',\mathcal{D}'')$
is not equivariantly isomorphic to $X_{f}$ by Definition \ref{The-A-H-quotient}.
Therefore $\sigma_{1}$ is an isomorphism. The same holds for $\sigma_{2}$. 
\par\end{flushleft}
\begin{flushleft}
Since $\mathcal{D}_{f}$ and $\mathcal{D}'_{f'}$ are minimal, the
pairs $(Y_{f},\mathcal{D}_{f})$ and $(Y'_{f'},\mathcal{D}'_{f'})$
are equivalent, that is, there exists an isomorphism $\Phi:Y_{f}\rightarrow Y'_{f'}$
such that $(\Phi^{-1})_{*}(\mathcal{D}'_{f'})=\mathcal{D}_{f}$. This
implies that the pairs $(Y,\hat{\mathcal{D}})$ and $(Y',\hat{\mathcal{D}'})$
are birationally equivalent and we obtain a commutative diagram :
\par\end{flushleft}
\[
\xymatrix{\mathbb{S}(Y',\mathcal{D}')=\mathbb{A}^{n}\ar[d]_{q} &  & \mathbb{A}_{f'}^{n}\simeq X_{f}\ar@{^{(}->}[rr]^{j}\ar[d]_{q}\ar@{_{(}->}[ll]_{j'} &  & X=\mathbb{S}(Y,\mathcal{D})\ar[d]_{q}\\
\mathbb{A}^{n}/\!/\mathbb{G}_{m}=Y'_{0} &  & Y'_{0,f'}\simeq Y{}_{0,f}\ar@{^{(}->}[rr]\ar@{_{(}->}[ll] &  & X/\!/\mathbb{G}_{m}=Y_{0}\\
Y'\ar[u] &  & Y'_{f'}\simeq Y_{f}\ar@{^{(}->}[rr]^{i}\ar[u]\ar@{_{(}->}[ll]_{i'} &  & Y.\ar[u]
}
\]

Conversely, assume that $X=\mathbb{S}(Y,\mathcal{D})$ and $\mathbb{A}^{n}=\mathbb{S}(Y',\mathcal{D}')$
endowed with an hyperbolic $\mathbb{G}_{m}$-action are such that
the pairs $(Y,\hat{\mathcal{D}})$ and $(Y',\hat{\mathcal{D}'})$
are birationally equivalent. We can further assume that there exists
a birational map between $Y$ and $Y'$ which restricts to an isomorphism
$\phi:Y_{g}\rightarrow Y'_{g'}$ between the principal open sets $Y_{g}$
of $Y$ and $Y'_{g'}$ of $Y'$ corresponding to suitable functions
$g\in\varGamma(Y,\mathcal{O}_{Y})$ and $g'\in\varGamma(Y',\mathcal{O}_{Y'})$
whose zero loci do not intersect the exceptional divisors of $Y\rightarrow Y_{\text{0 }}$
and $Y'\rightarrow Y'_{0}$ respectively. Similarly as above the function
$g$ can be identified with an invariant function on $X$ which does
not vanish at $x_{0}$. By virtue of \cite[Proposition 3.3]{A-H-S=0000FC}
the pair $(Y_{g},\mathcal{D}_{g})$ describes the equivariant open
embedding $X_{g}\simeq\mathbb{S}(Y_{g},\mathcal{D}_{g})\hookrightarrow X$.
In the same way, $g'$ corresponds to an invariant function on $\mathbb{A}^{n}$and
the pair $(Y_{g'},\mathcal{D}_{g'})$ describes the equivariant open
embedding $\mathbb{A}_{g'}^{n}\simeq\mathbb{S}(Y'_{g'},\mathcal{D}'_{g'})\hookrightarrow\mathbb{A}^{n}$.
This gives the result.
\end{proof}

\section{\label{sec:3}Examples of $\mathbb{G}_{m}$-uniformly rational threefolds}

In the particular case of affine threefolds, $\mathbb{G}_{m}$-linear
uniform rationality is reduced (by the previous section) to a problem
of birational geometry in dimension $2$. Indeed, using Theorem \ref{Theorem equivalence},
the question may then be considered at the level of the A-H quotients
which are rational semi-projective surfaces.

\noindent

\subsection{Hyperbolic $\mathbb{G}_{m}$-action on $\mathbb{A}^{3}$}

Using this presentation and the fact that every algebraic $\mathbb{G}_{m}$-action
on $\mathbb{A}^{3}=\mathrm{Spec}(\mathbb{C}[x,y,z])$ is linearisable
\cite{Ka-K-ML-R}, we are able to characterize hyperbolic $\mathbb{G}_{m}$-actions
on $\mathbb{A}^{3}$ in terms of their A-H presentations. Indeed let
$\mathbb{G}_{m}\times\mathbb{A}^{3}\rightarrow\mathbb{A}^{3}$ be
an effective hyperbolic $\mathbb{G}_{m}$-action given by $\lambda\cdot(x,y,z)\rightarrow(\lambda^{a}x,\lambda^{b}y,\lambda^{-c}z)$
with $(a,b,c)\in\mathbb{Z}_{>0}^{3}$. After a suitable $\mathbb{G}_{m}$-invariant
cyclic cover along coordinate axes, we can assume that $\mathbb{A}^{3}/\!/\mathbb{G}_{m}\simeq\mathbb{A}^{2}$,
and the relation between such cyclic covers ans the A-H presentations
of the $\mathbb{T}$-varieties are controlled by \cite{P1}. Let $(\alpha,\beta,\gamma)\in\mathbb{Z}^{3}$
be such that $\alpha a+\beta b-\gamma c=1$. Let $\rho(a,c)$ be the
greatest common divisor of $a$ and $c$, let $\rho(b,c)$ be the
greatest common divisor of $b$ and $c$, and let $\delta$ be the
greatest common divisor of $\frac{a}{\rho(a,c)}$ and $\frac{b}{\rho(b,c)}$
Then we have:
\begin{prop}
\label{classification A3} Up to equivariant cyclic covers, every
$\mathbb{A}^{3}$ endowed with a hyperbolic $\mathbb{G}_{m}$-action
is equivariantly isomorphic to the $\mathbb{G}_{m}$-variety $\mathbb{S}(Y,\mathcal{D})$
with $Y$ and $\mathcal{D}$  defined as follows: 

i) $Y$ is isomorphic to a blow-up $\pi:\tilde{\mathbb{A}}^{2}\rightarrow\mathbb{A}^{2}$
of $\mathbb{A}^{2}$ at the origin.

ii) $\mathcal{D}$ is of the form: 
\[
\mathcal{D}=\left\{ \frac{\alpha\rho(a,c)}{c}\right\} \otimes D_{1}+\left\{ \frac{\beta\rho(b,c)}{c}\right\} \otimes D_{2}+\left[\frac{\gamma}{\delta},\frac{\gamma}{\delta}+\frac{1}{\delta c}\right]\otimes E,
\]
 with $D_{1}$, $D_{2}$ are strict transforms of the coordinate axes
and $E$ is the exceptional divisor of $\pi$.
\end{prop}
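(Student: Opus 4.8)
The plan is to use the explicit toric description of Remark~\ref{ps-divisor construction}, taking advantage of the fact that $\mathbb{A}^{3}$ with the action $\lambda\cdot(x,y,z)=(\lambda^{a}x,\lambda^{b}y,\lambda^{-c}z)$ is already a toric variety for the big torus $\mathbb{T}=\mathbb{G}_{m}^{3}$, the acting $\mathbb{G}_{m}$ being the one-parameter subgroup $F\colon\mathbb{Z}\to N=\mathbb{Z}^{3}$, $1\mapsto(a,b,-c)$. Since $a,b,c>0$ the origin is the only fixed point, so the action is hyperbolic with a unique fixed point and the A-H quotient carries a single exceptional divisor over $Y_{0}$. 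First I would perform the announced reduction: after a suitable equivariant cyclic cover along the coordinate axes, whose interaction with the A-H datum is governed by~\cite{P1}, one may assume that $A_{0}$ is a polynomial ring, i.e. $\mathbb{A}^{3}/\!/\mathbb{G}_{m}\simeq\mathbb{A}^{2}$; in fan-theoretic terms this is exactly the condition that the extremal rays of the image cone form a lattice basis of $\overline{N}$. By the last sentence of Remark~\ref{ps-divisor construction}, for this normalized model the surface $Z$ produced by the recipe \emph{is} the A-H quotient $Y$, so it suffices to carry out the toric computation there.

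For part (i) I would choose the section $s=(\alpha,\beta,\gamma)$ of $F$, which is legitimate precisely because $\alpha a+\beta b-\gamma c=1$, and compute the projection $P\colon N\to\overline{N}=N/\mathbb{Z}(a,b,-c)$ from a $\mathbb{Z}$-basis of $\overline{M}=\ker F^{\vee}=\{m\,:\,am_{1}+bm_{2}=cm_{3}\}$. Writing $P(e_{i})=k_{i}v_{i}$ with $v_{i}$ primitive gives the three rays of the fan of $Z$. Applying $P$ to $ae_{1}+be_{2}-ce_{3}=F(1)$ yields $c\,P(e_{3})=a\,P(e_{1})+b\,P(e_{2})$, so $P(e_{3})$ is a strictly positive combination of $P(e_{1}),P(e_{2})$ and hence lies in the interior of the cone they span: the negative-weight coordinate $z$ produces the interior ray, while $x$ and $y$ produce the two extremal rays. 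In the normalized model $v_{1},v_{2}$ form a basis, so $Y_{0}=\mathrm{Cone}(v_{1},v_{2})$ is $\mathbb{A}^{2}$; the fixed point maps to its origin, the center $\langle A_{d}\cdot A_{-d}\rangle$ of the blow-up of Definition~\ref{The-A-H-quotient} is supported there, and blowing up the reduced origin of $\mathbb{A}^{2}$ subdivides the cone by the ray $v_{1}+v_{2}$. Thus $Y=\tilde{\mathbb{A}}^{2}$ is the blow-up of $\mathbb{A}^{2}$ at the origin, $E$ corresponds to $v_{3}=v_{1}+v_{2}$ and $D_{1},D_{2}$ to $v_{1},v_{2}$, which is (i). Substituting $v_{3}=v_{1}+v_{2}$ back into $c\,P(e_{3})=a\,P(e_{1})+b\,P(e_{2})$ and comparing coefficients on the basis $v_{1},v_{2}$ gives the convenient relations $ak_{1}=bk_{2}=ck_{3}$.

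For part (ii) I would evaluate $s(\mathbb{R}_{\geq0}^{3}\cap P^{-1}(v_{i}))$ for each ray. Because the direction $(a,b,-c)$ of $\ker P$ has mixed signs, its cosets meet the orthant $\mathbb{R}_{\geq0}^{3}$ in a single point over each extremal ray and in a genuine segment over the interior ray; this is why $D_{1},D_{2}$ receive one-point coefficients and $E$ an interval. Over $v_{1}$ the intersection is the point $\frac{1}{k_{1}}e_{1}$, so the coefficient is $\{\alpha/k_{1}\}$, and similarly $\{\beta/k_{2}\}$ over $v_{2}$; the index computation, together with $ak_{1}=bk_{2}=ck_{3}$ and $k_{3}=\delta$, turns these into $\{\alpha\rho(a,c)/c\}$ and $\{\beta\rho(b,c)/c\}$. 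Over $v_{3}$ the intersection is the segment from $\frac{1}{\delta}e_{3}$ (where $n_{3}=1/\delta$) to the face $\{n_{3}=0\}$; parametrizing it as $\frac{1}{\delta}e_{3}+t(a,b,-c)$ with $t\in[0,\frac{1}{\delta c}]$ and using $s(a,b,-c)=1$, the value of $s$ runs over $[\frac{\gamma}{\delta},\frac{\gamma}{\delta}+\frac{1}{\delta c}]$, which is the stated coefficient of $E$.

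The hard part will be the purely arithmetic index computation. Computing $k_{3}$ amounts to showing that the divisibility index of $e_{3}$ in $\overline{N}$ equals the largest divisor of $\gcd(a,b)$ coprime to $c$, and then a prime-by-prime check, using the standing hypothesis $\gcd(a,b,c)=1$, identifies this with $\delta=\gcd(a/\rho(a,c),b/\rho(b,c))$; the analogous computation for $k_{1},k_{2}$ together with the normalization forces $a/\rho(a,c)=b/\rho(b,c)=\delta$ and hence $k_{1}=c/\rho(a,c)$, $k_{2}=c/\rho(b,c)$. I would also need to check two points of hygiene: that the cyclic-cover reduction really brings an arbitrary $(a,b,c)$ into this normalized form, and that the final datum is independent of the chosen solution $(\alpha,\beta,\gamma)$ of $\alpha a+\beta b-\gamma c=1$. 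The latter is immediate, since two solutions differ by an element of $\overline{M}$, which shifts $s$ by a character and therefore only translates $\mathcal{D}$ by the principal polyhedral divisor of that character, producing an A-H equivalent pair.
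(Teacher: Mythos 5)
Your overall strategy is the same as the paper's: both proofs run the toric recipe of Remark~\ref{ps-divisor construction} on the exact sequence $0\to\mathbb{Z}\to\mathbb{Z}^{3}\to\mathbb{Z}^{2}\to0$ with $F={}^{t}(a,b,-c)$ and $s=(\alpha,\beta,\gamma)$, identify the three rays with the two coordinate axes and the exceptional divisor, and read off the coefficients from $s(\mathbb{R}_{\geq0}^{3}\cap P^{-1}(v_{i}))$; your evaluation of the segment over $v_{3}$ and of the two points over $v_{1},v_{2}$ reproduces exactly the displayed coefficients. You supply details the paper omits (the derivation of $P$ from $\ker F^{\vee}$, the relation $cP(e_{3})=aP(e_{1})+bP(e_{2})$ showing the third ray is interior to the cone spanned by the other two, and the independence of the datum from the choice of $(\alpha,\beta,\gamma)$), whereas the paper simply writes down the matrix $P$ with the denominators $c/\rho(a,c)$ and $c/\rho(b,c)$ already in place.

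The one step that would fail as written is your claim that the normalization ``$A_{0}$ is a polynomial ring'' (equivalently, that the extremal rays of the image cone form a lattice basis of $\overline{N}$) forces $a/\rho(a,c)=b/\rho(b,c)=\delta$ and hence $v_{3}=v_{1}+v_{2}$. Take $(a,b,c)=(2,3,1)$: then $A_{0}=\mathbb{C}[xz^{2},yz^{3}]\simeq\mathbb{C}[u,v]$, so your normalization is already satisfied, yet $a/\rho(a,c)=2\neq3=b/\rho(b,c)$, the third column of $P$ is the primitive vector $(2,3)$, the resulting surface (rays $(1,0)$, $(0,1)$, $(2,3)$) is a singular weighted blow-up rather than the blow-up of $\mathbb{A}^{2}$ at the origin, and the relation $ak_{1}=bk_{2}=ck_{3}$ on which your index computation rests also fails. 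To reach the asserted normal form, the ``suitable equivariant cyclic covers'' of the statement must include covers along the negative-weight axis (replacing $c$ by a multiple of $\mathrm{lcm}(a,b)$, say), not only the covers that make $A_{0}$ polynomial. This is precisely the point where the paper itself is terse --- it builds the conclusion into the asserted shape of $P$ without comment --- but since your argument explicitly rests the identification $v_{3}=v_{1}+v_{2}$ on the weaker normalization, you need to strengthen the reduction step before the rest of your computation, which is otherwise correct, goes through.
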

\begin{proof}
Let $\mathbb{A}^{3}$ be endowed with a linear action of $\mathbb{G}_{m}$,
the A-H presentation is obtained from the following exact sequence:

\[
\xymatrix{0\ar[r] & \mathbb{Z}\ar[r]_{F} & \mathbb{Z}^{3}\ar[r]_{P}\ar@/_{1pc}/[l]_{s} & \mathbb{Z}^{2}\ar[r] & 0}
,
\]

\begin{flushleft}
by the method described in Remark \ref{ps-divisor construction},
where $F=\overset{}{}^{t}(a,b,-c)$, $s=(\alpha,\beta,\gamma)$ and
$P=\left(\begin{array}{ccc}
\frac{c}{\rho(a,c)} & 0 & \frac{a}{\rho(a,c)}\\
0 & \frac{c}{\rho(b,c)} & \frac{b}{\rho(b,c)}
\end{array}\right)$.
\par\end{flushleft}
\noindent \begin{flushleft}
The algebraic quotient of $\mathbb{A}^{3}$ for an hyperbolic $\mathbb{G}_{m}$-action
is isomorphic to $\mathbb{A}^{2}/\!/\mu$ where $\mu$ is a finite
cyclic group \cite{G-K-R}, thus the A-H quotient $Y(\mathbb{A}^{3})$
is by construction a blow-up of $\mathbb{A}^{2}/\!/\mu$. In this
case $Y(\mathbb{A}^{3})$ is smooth and corresponds to the toric variety
$Z$ defined in Remark \ref{ps-divisor construction} that is a blow-up
of $\mathbb{A}^{2}$ whose center is supported at the origin.
\par\end{flushleft}
\noindent \begin{flushleft}
Let now us consider each $v_{i}$, for $i=1,\ldots,3$, as in remark
\ref{ps-divisor construction}, that is the first integral vectors
of the unidimensional cones generated by the i-th column vectors of
$P=\left(\begin{array}{ccc}
\frac{c}{\rho(a,c)} & 0 & \frac{a}{\rho(a,c)}\\
0 & \frac{c}{\rho(b,c)} & \frac{b}{\rho(b,c)}
\end{array}\right)$. The first two $v_{1}=\left(\begin{array}{c}
1\\
0
\end{array}\right)$ and $v_{2}=\left(\begin{array}{c}
0\\
1
\end{array}\right)$ as rays defining a toric variety correspond to the generators of
$\mathbb{A}^{2}$ and thus the associated divisors are the strict
transforms of the coordinate axes and the last on $v_{3}$ corresponds
to the exceptional divisor. To determine the associated coefficients,
we used the formula $[a_{i},b{}_{i}]=s(\mathbb{R}_{\geq0}^{n}\cap P^{-1}(v_{i}))$
given in remark \ref{ps-divisor construction}.
\par\end{flushleft}
\end{proof}
\begin{example}
\cite[example 1.4.8]{P2} The presentation of $\mathbb{A}^{3}=\mathrm{Spec}(\mathbb{C}[x,y,z])$
equipped with the hyperbolic $\mathbb{G}_{m}$-action $\lambda\cdot(x,y,z)=(\lambda^{2}x,\lambda^{3}y,\lambda^{-6}z)$
is $\mathbb{S}(\tilde{\mathbb{A}}_{(u,v)}^{2},\mathcal{D})$ with
$\pi:\tilde{\mathbb{A}}_{(u,v)}^{2}\rightarrow\mathbb{A}^{2}$ the
blow-up of $\mathbb{A}^{2}=\mathrm{Spec}(\mathbb{C}[u,v])$ at the
origin and 
\[
\mathcal{D}=\left\{ -\frac{1}{3}\right\} D{}_{1}+\left\{ \frac{1}{2}\right\} D{}_{2}+\left[0,\frac{1}{6}\right]E,
\]
where $E$ is the exceptional divisor of the blow-up, $D_{1}$ and
$D_{2}$ are the strict transforms of the lines $\left\{ u=0\right\} $
and $\left\{ v=0\right\} $ in $\mathbb{A}^{2}$ respectively. Indeed,
$\mathbb{A}^{3}/\!/\mathbb{G}_{m}=\mathrm{Spec}(\mathbb{C}[u,v])$
and $d>0$ in Definition \ref{The-A-H-quotient} has to be chosen
so that $\bigoplus_{n\in\mathbb{Z}}A_{dn}$ is generated by $A_{0}$
and $A_{\pm d}$. This is the case if $d$ is the least common multiple
of the weights of the $\mathbb{G}_{m}$-action on $\mathbb{A}^{3}$.
Thus $d=6$ and $Y(X)$ is the blow-up of $\mathbb{A}^{2}=\mathrm{Spec}(\mathbb{C}[u,v])$
with center at the closed sub-scheme with ideal $(u,v)$, i.e. the
origin with our choice of coordinates.
\end{example}

\subsection{$\mathbb{G}_{m}$-linear uniform rationality.}

In this subsection we will prove that some hypersurfaces of $\mathbb{A}^{4}$
are $\mathbb{G}_{m}$-linearly uniformly rational. In particular,
every \emph{Koras-Russell threefold of the first kind} $X$ is $\mathbb{G}_{m}$-linearly
uniformly rational. These varieties are defined by equations of the
form: 
\[
\{x+x^{d}y+z^{\alpha_{2}}+t^{\alpha_{3}}=0\}\subset\mathbb{A}^{4}=\mathrm{Spec}(\mathbb{C}[x,y,z,t]),
\]
 where $d\geq2$, and $\alpha_{2}$ and $\alpha_{3}$ are coprime.
They are smooth rational and endowed with hyperbolic $\mathbb{G}_{m}$-actions
with algebraic quotients isomorphic to $\mathbb{A}^{2}/\!/\mu$ where
$\mu$ is a finite cyclic group. They have been classified by Koras
and Russell, in the context of the linearization problem for $\mathbb{G}_{m}$-actions
on $\mathbb{A}^{3}$ \cite{Ka-K-ML-R}.

These threefolds can be viewed  as affine modifications of $\mathbb{A}^{3}=\mathrm{Spec}(\mathbb{C}[x,z,t])$
along the principal divisor $D_{f}=\{f=0\}$ with center $I=(f,g)$
where $f=-x^{d}$ and $g=x+z^{\alpha_{^{2}}}+t^{\alpha_{3}}$. But
since the center is supported on the cuspidal curve included in the
plane $\{x=0\}$ and given by the equation : $C=\{x=z^{\alpha_{2}}+t^{\alpha_{3}}=0\}$
(see \cite{Z}) their uniformly rationality does not follow directly
from Corollary \ref{cor:The-affine-modifications}.

\noindent

\subsubsection{A general construction}

Here we give a general criterion to decide the $\mathbb{G}_{m}$-uniform
rationality of certain threefolds, arising as stable hypersurfaces
of $\mathbb{A}^{4}$ endowed with a linear $\mathbb{G}_{m}$-action.
Since $X$ is rational, its A-H quotient $Y(X)$ is also rational.

The aim is to use the notion of birational equivalence of ps-divisors
to construct an isomorphism between a $\mathbb{G}_{m}$-stable open
set of the variety $X$ with a corresponding stable open subset of
$\mathbb{A}^{3}$. By Theorem \ref{Theorem equivalence} and Proposition
\ref{classification A3}, the technique is to consider a well chosen
sequence of birational transformations $Y(X)\rightarrow\tilde{\mathbb{A}}^{2}$
which maps the support of the ps-divisor corresponding to the threefolds
$X$ on to the strict transforms of the coordinate lines and the exceptional
divisor in $\tilde{\mathbb{A}}^{2}$. Moreover, since we look for
$\mathbb{G}_{m}$-stable open subset of $X$ containing the fixed
point, it is enough to consider birational map $Y_{0}(X)\rightarrow\mathbb{A}^{2}$
which send a pair of curves on the coordinates axes of $\mathbb{A}^{2}$.

Let $p\in\mathbb{C}[v]$ be a polynomial of degree $k\geq1$ such
that $p(0)=0$, let $\alpha_{2}$, $\alpha_{3}$ and $d$ be integers
such that $d\alpha_{3}$ and $\alpha_{2}$ are coprime. Let $X$ be
a hypersurface in $\mathbb{A}^{4}=\mathrm{Spec}(\mathbb{C}[x,y,z,t])$
defined by one of the following equations:

\[
X=\{y^{d-1}z^{\alpha_{2}}+t^{\alpha_{3}}+p(xy)/y=0\}\subset\mathbb{A}^{4}=\mathrm{Spec}(\mathbb{C}[x,y,z,t]).
\]

Every such $X$ is endowed with a hyperbolic $\mathbb{G}_{m}$-action
induced by the linear action on \noun{$\mathbb{A}^{4}$} defined by
$\lambda\cdot(x,y,z,t)=(\lambda^{\alpha_{2}\alpha_{3}}x,\lambda^{-\alpha_{2}\alpha_{3}}y,\lambda^{d\alpha_{3}}z,\lambda^{\alpha_{2}}t)$.
The unique fixed point for this action is the origin of $\mathbb{A}^{4}$
and is a point of $X$.
\begin{thm}
\label{theoreme famiille unif} With the notation above we have:

1) $X$ is equivariantly isomorphic to $\mathbb{S}(\tilde{\mathbb{A}}_{(u,v^{d})}^{2},\mathcal{D})$
with 
\[
\mathcal{D}=\left\{ \frac{a}{\alpha_{2}}\right\} D{}_{1}+\left\{ \frac{b}{\alpha_{3}}\right\} D{}_{2}+\left[0,\frac{1}{\alpha_{2}\alpha_{3}}\right]E,
\]
where $E$ is the exceptional divisor of the blow-up $\pi:\tilde{\mathbb{A}}_{(u,v^{d})}^{2}\rightarrow\mathbb{A}^{2}=\mathrm{Spec}(\mathbb{C}[u,v])$,
$D_{1}$ and $D_{2}$ are the strict transforms of the curves $L_{1}=\left\{ u=0\right\} $
and $L_{2}=\left\{ u+p(v)=0\right\} $ in $\mathbb{A}^{2}$ respectively,
and $(a,b)\in\mathbb{Z}^{2}$ are chosen so that $ad\alpha_{3}+b\alpha_{2}=1$.

2) $X$ is smooth if and only if $L_{1}+L_{2}$ is a SNC divisor in
$\mathbb{A}^{2}$.

3) Under these conditions, $X$ is \emph{$\mathbb{G}_{m}$}-linearly
rational at $(0,0,0,0)$.
\end{thm}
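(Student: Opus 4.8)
The plan is to establish the three parts in order, since part 3 depends on the presentation from part 1 and the smoothness criterion from part 2.

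For part 1, I would compute the Altmann--Hausen presentation of $X$ directly using the embedding technique of Remark \ref{ps-divisor construction}. The ambient space is $\mathbb{A}^4$ with the given linear $\mathbb{G}_m$-action of weights $(\alpha_2\alpha_3,-\alpha_2\alpha_3,d\alpha_3,\alpha_2)$, so $F={}^t(\alpha_2\alpha_3,-\alpha_2\alpha_3,d\alpha_3,\alpha_2)$. The first task is to identify the A-H quotient: the invariant ring $A_0$ of the coordinate ring of $X$ should be generated by $u=xy$ and $v^d=y^{\text{(something)}}$-type monomials, so that $Y_0(X)\simeq\mathbb{A}^2=\operatorname{Spec}(\mathbb{C}[u,v])$ and $Y(X)$ is the blow-up $\tilde{\mathbb{A}}^2_{(u,v^d)}$ prescribed in the statement. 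I would then read off the supports $L_1=\{u=0\}$ and $L_2=\{u+p(v)=0\}$ from the defining equation $y^{d-1}z^{\alpha_2}+t^{\alpha_3}+p(xy)/y=0$, rewriting it as $z^{\alpha_2}+t^{\alpha_3}=-p(xy)/y^d=-(u+p(v))/v^d$ up to the relevant monomial identification, which shows that the positive and negative parts of the grading are governed by these two curves. The segment coefficients $\{a/\alpha_2\}$, $\{b/\alpha_3\}$ on $D_1,D_2$ and the interval $[0,1/(\alpha_2\alpha_3)]$ on $E$ then come from applying the formula $[a_i,b_i]=s(\mathbb{R}^n_{\geq 0}\cap P^{-1}(v_i))$ with a minimal section $s$ chosen using $ad\alpha_3+b\alpha_2=1$.

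For part 2, smoothness of $X$ is equivalent to smoothness of $\mathbb{S}(\tilde{\mathbb{A}}^2,\mathcal{D})$, and I would use the smoothness criterion for $\mathbb{G}_m$-varieties in terms of their A-H data. Since $\tilde{\mathbb{A}}^2$ is smooth and the divisorial part of $\mathcal{D}$ is supported on $D_1+D_2+E$, smoothness of the total space forces the supporting divisor to have normal crossings where the segments interact; concretely, $X$ is smooth iff $D_1$ and $D_2$ meet $E$ transversally and each other transversally, which downstairs on $\mathbb{A}^2$ is exactly the condition that $L_1+L_2=\{u=0\}\cup\{u+p(v)=0\}$ is a simple normal crossings divisor. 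I expect the forward implication to follow from computing the Jacobian of the defining equation along the singular-candidate locus (the curve $C$ in the plane $\{x=0\}$), and the reverse implication from the explicit local triviality of the bundle structure away from the fixed point.

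For part 3, which is the heart of the argument, I would invoke Theorem \ref{Theorem equivalence} together with Proposition \ref{classification A3}. The goal is to produce a pair $(Y',\mathcal{D}')$ presenting $\mathbb{A}^3$ with a hyperbolic $\mathbb{G}_m$-action whose $(Y',\hat{\mathcal{D}'})$ is birationally equivalent to $(\tilde{\mathbb{A}}^2_{(u,v^d)},\hat{\mathcal{D}})$. By Proposition \ref{classification A3}, the target pair for $\mathbb{A}^3$ is a blow-up of $\mathbb{A}^2$ at the origin with divisor supported on the two coordinate axes and the exceptional divisor, carrying segments of the same shape $\{\cdot\}D_1+\{\cdot\}D_2+[0,\tfrac{1}{\alpha_2\alpha_3}]E$. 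The main obstacle, and the step requiring real work, is constructing an explicit birational map $\tilde{\mathbb{A}}^2_{(u,v^d)}\dashrightarrow\mathbb{A}^2$ sending the pair of curves $L_1,L_2$ to the two coordinate axes while matching the exceptional behavior; this is precisely the ``union of lines'' birational rectification alluded to in the introduction, and it is where the hypothesis $p(0)=0$ with $\deg p=k\geq 1$ is essential. I would realize this map as a sequence of blow-ups and blow-downs that straightens $L_2=\{u+p(v)=0\}$ to a coordinate line, check that the strict transforms of $\hat{\mathcal{D}}$ and $\hat{\mathcal{D}'}$ coincide after this transformation (the removed component $D_2$ or $D_1$ not meeting $E$ being irrelevant to $\hat{\mathcal{D}}$), and conclude via Theorem \ref{Theorem equivalence} that a $\mathbb{G}_m$-stable principal open neighborhood of the origin in $X$ is equivariantly isomorphic to a $\mathbb{G}_m$-stable open subset of $\mathbb{A}^3$, hence of an $\mathbb{A}^n$ with linear action after linearization, giving $\mathbb{G}_m$-linear rationality at $(0,0,0,0)$.
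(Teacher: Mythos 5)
Your proposal follows essentially the same route as the paper: part 1 via the toric embedding and exact-sequence method of Remark \ref{ps-divisor construction} with $F={}^t(\alpha_2\alpha_3,-\alpha_2\alpha_3,d\alpha_3,\alpha_2)$ and section $s=(0,0,a,b)$; part 2 via the Jacobian criterion applied to the factored form of $p$ (the paper uses only this direct computation, not any general A-H smoothness criterion of the kind you float, so you should lean on the Jacobian argument rather than the unproved ``normal crossings of the support'' principle); and part 3 by rectifying the pair $(L_1,L_2)$ to coordinate axes by a birational map that is an isomorphism near the origin, then concluding with Theorem \ref{Theorem equivalence} and Proposition \ref{classification A3}. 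The only substantive content you leave unconstructed is the explicit rectification, which the paper supplies both as a chain of elementary transformations through Hirzebruch surfaces and as the explicit map $\phi(u,v)=(-u'(g(v'+u')+1),\,v'+u')$ where $p(v)=v(1+g(v))$.
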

\begin{proof}
1) The A-H presentation is obtained from the following exact sequence:

\[
\xymatrix{0\ar[r] & \mathbb{Z}\ar[r]_{F} & \mathbb{Z}^{4}\ar[r]_{P}\ar@/_{1pc}/[l]_{s} & \mathbb{Z}^{3}\ar[r] & 0}
,
\]

\begin{flushleft}
by the method described in Remark \ref{ps-divisor construction},
where $F=\overset{}{}^{t}(\alpha_{2}\alpha_{3},-\alpha_{2}\alpha_{3},d\alpha_{3},\alpha_{2})$,
$P=\left(\begin{array}{cccc}
1 & 1 & 0 & 0\\
0 & d & \alpha_{2} & 0\\
0 & 1 & 0 & \alpha_{3}
\end{array}\right)$ and $s=(0,0,a,b)$ chosen such that $ad\alpha_{3}+b\alpha_{2}=1$.
\par\end{flushleft}
The corresponding toric variety $Z$ is the blow-up of $\mathbb{A}^{3}=\mathrm{Spec}(\mathbb{C}[u,v,w])$
along the sub-scheme with defining by the ideal $I=(u,v^{d},v^{d-1}w,\ldots,vw^{d-1},w^{d})$,
and $Y$ corresponds to the strict transform by $\pi:\tilde{\mathbb{A}}_{I}^{3}\rightarrow\mathbb{A}^{3}\simeq\mathbb{A}^{4}/\!/\mathbb{G}_{m}$
of the surface $\left\{ u+w+p(v)=0\right\} \simeq\mathrm{Spec}(\mathbb{C}[u,v])$,
that is, $Y\simeq\tilde{\mathbb{A}}_{(u,v^{d})}^{2}$ (see \cite[section 3.1]{P1}).

The ps-divisor $\mathcal{D}$ is of the form $\left\{ \frac{a}{\alpha_{2}}\right\} D{}_{1}+\left\{ \frac{b}{\alpha_{3}}\right\} D{}_{2}+\left[0,\frac{1}{\alpha_{2}\alpha_{3}}\right]E$,
where $D_{1}$ corresponds to the restriction to $Y$ of the toric
divisor given by the ray $v_{3}$ and $D_{2}$ corresponds to the
restriction to $Y$ of the toric divisor given by the ray $v_{4}$,
that is, the strict transforms of the curves $\left\{ u=y^{d}z^{\alpha_{2}}=0\right\} $
and $\left\{ w=yt^{\alpha_{3}}=-u-p(v)=0\right\} $ in $\mathbb{A}^{2}$
respectively. The divisor $E$ corresponds to the divisor given by
$v_{2}$, that is, the exceptional divisor of $\pi:\tilde{\mathbb{A}}_{(u,v^{d})}^{2}\rightarrow\mathbb{A}^{2}$.

2) As $p(0)=0$, the equation of $X$ takes the form : 
\[
y^{d-1}z^{\alpha_{2}}+t^{\alpha_{3}}+x\prod_{i=1}^{k}(xy+\alpha_{i})=0,
\]

\begin{flushleft}
and using the Jacobian criterion, we conclude that $X$ is smooth
if and only if $\alpha_{i}\neq\alpha_{j}$ for $i\neq j$.
\par\end{flushleft}
\begin{flushleft}
3) Let $D=L_{1}+L_{2}\subset\mathbb{A}_{(u,v)}^{2}$ and let $\mathbb{A}_{(u,v)}^{2}\hookrightarrow\mathbb{P}_{[u:v:w]}^{2}$
be the embedding of $\mathbb{A}^{2}$ as the complement of the line
at the infinity $L_{\infty}=\{w=0\}$. We denote by $\bar{D}=\bar{L}_{1}+\bar{L}_{2}$
the closure of $D$ in $\mathbb{P}_{(u:v:w)}^{2}$ (see Figure 3.1).
The only singularity is at the intersection of $\bar{L}_{2}$ and
$L_{\infty}$. After a sequence of elementary birational transformations
we reach the $k$-th Hirzebruch surface $\mathbb{F}_{k}=\mathbb{P}(\mathcal{O}_{\mathbb{P}^{1}}\oplus\mathcal{O}_{\mathbb{P}^{1}}(k))$.
The proper transform of $\bar{L}_{2}$ is a smooth curve intersecting
the section of negative self-intersection transversally (see Figure
3.2). The second step is the blow-up of all the intersection points
between $\bar{L}_{1}$ and $\bar{L}_{2}$ except the point corresponding
to the origin in $\mathbb{A}^{2}$, followed by the contraction of
the proper transform of the fiber passing through each points of the
blowup (see Figure 3.3). The final configuration is then the Hirzebruch
surface $\mathbb{F}_{1}$ in which the proper transforms of $\bar{L}_{1}$
and $\bar{L}_{2}$ have self-intersection $1$ and intersect each
other in a unique point. Then $\mathbb{P}^{2}$ and the desired divisor
are obtained from $\mathbb{F}_{1}$ by contracting the negative section
(see Figure 3.4).
\par\end{flushleft}
\begin{figure}[H]
\begin{minipage}[b][1\totalheight][t]{0.45\columnwidth}%
\includegraphics[scale=0.45]{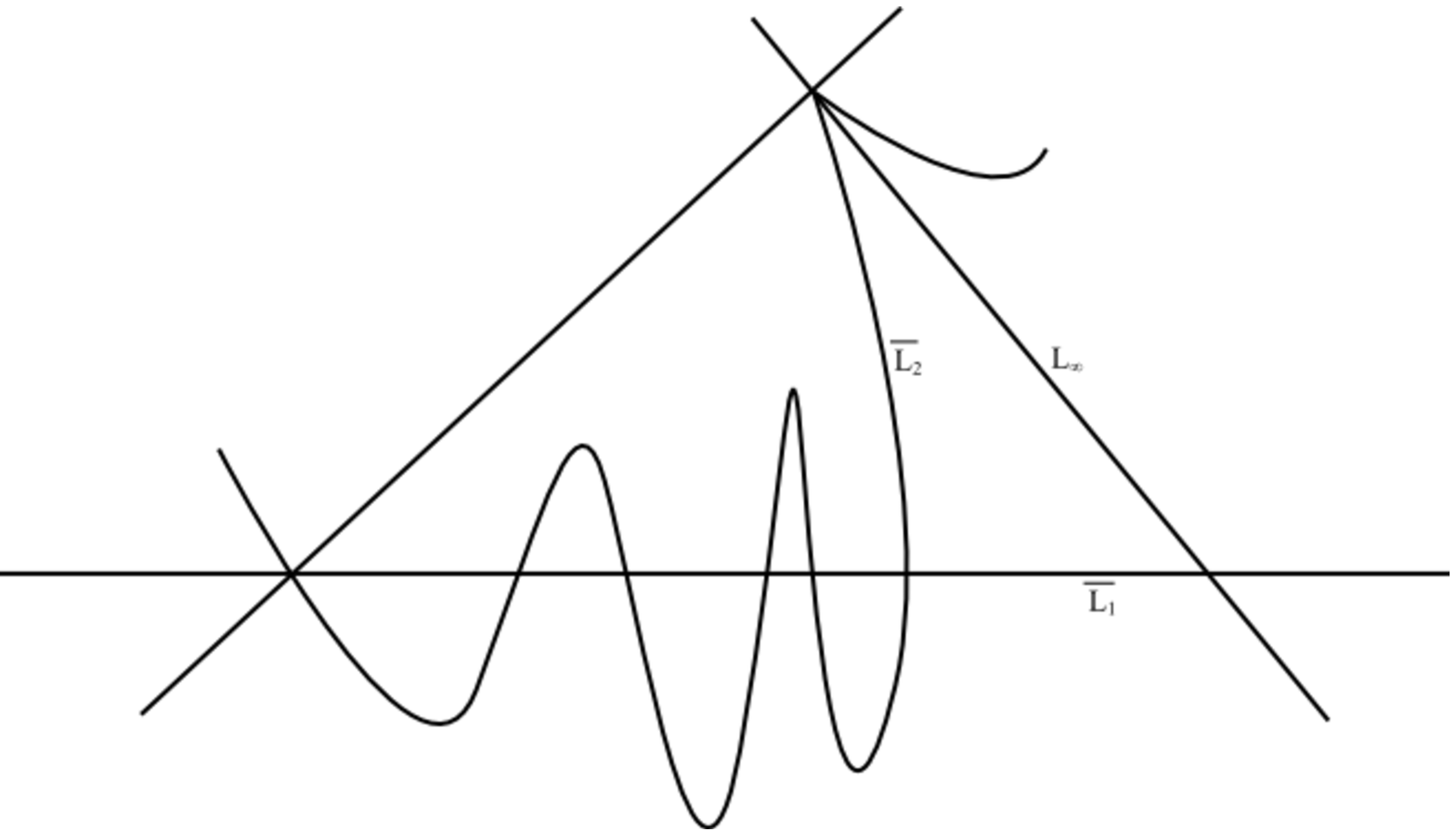}\caption{Embedding in $\mathbb{P}^{2}$ of the divisor in $\mathbb{P}^{2}$.}
\end{minipage}\hfill{}%
\begin{minipage}[b][1\totalheight][t]{0.45\columnwidth}%
\includegraphics[scale=0.45]{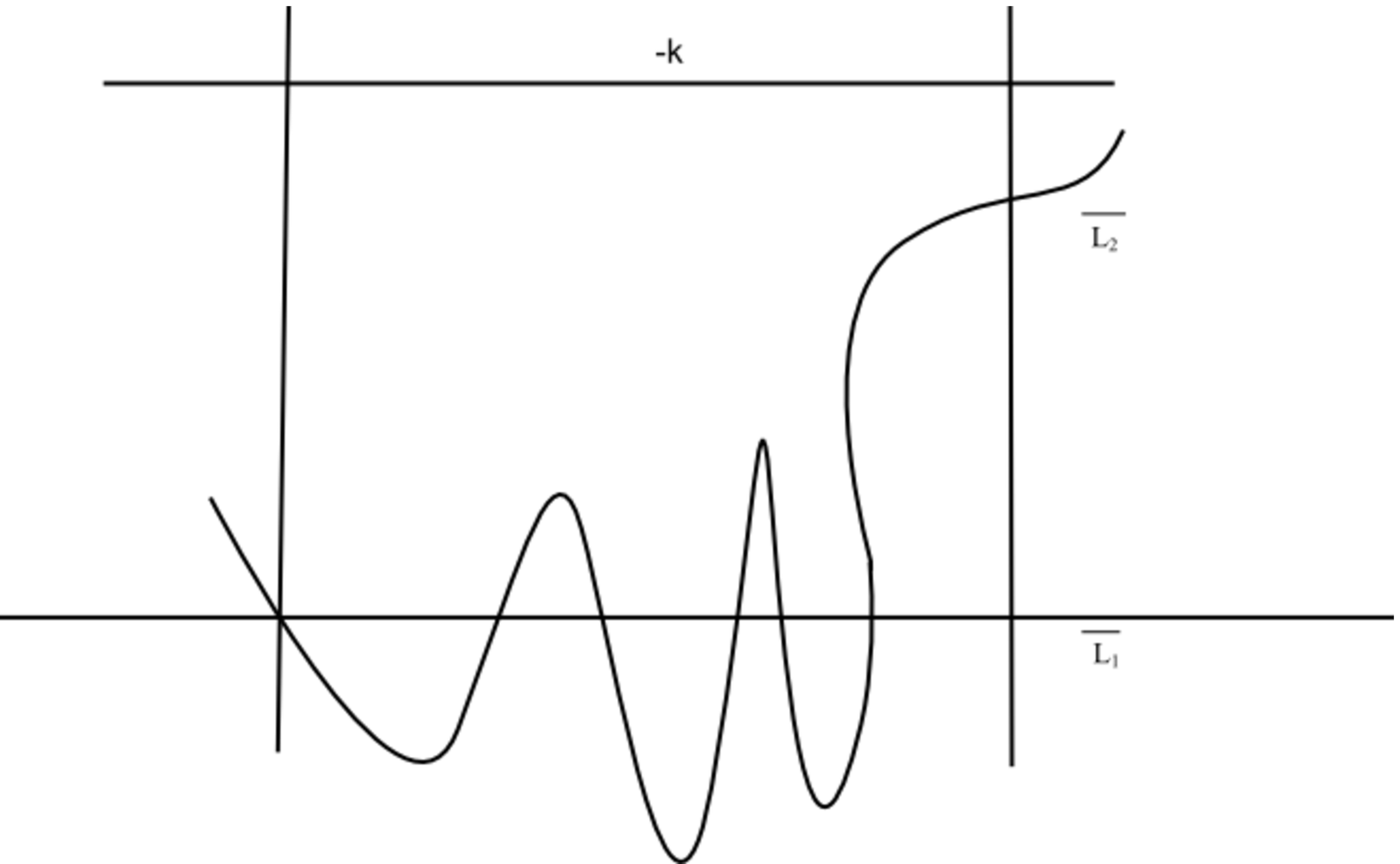}

\caption{First sequence of blow-ups and contractions, to obtain a smooth normal
crossing divisor divisor in $\mathbb{F}_{k}$.}
\end{minipage}
\end{figure}

\begin{figure}[H]
\begin{minipage}[b][1\totalheight][t]{0.45\columnwidth}%
\includegraphics[scale=0.45]{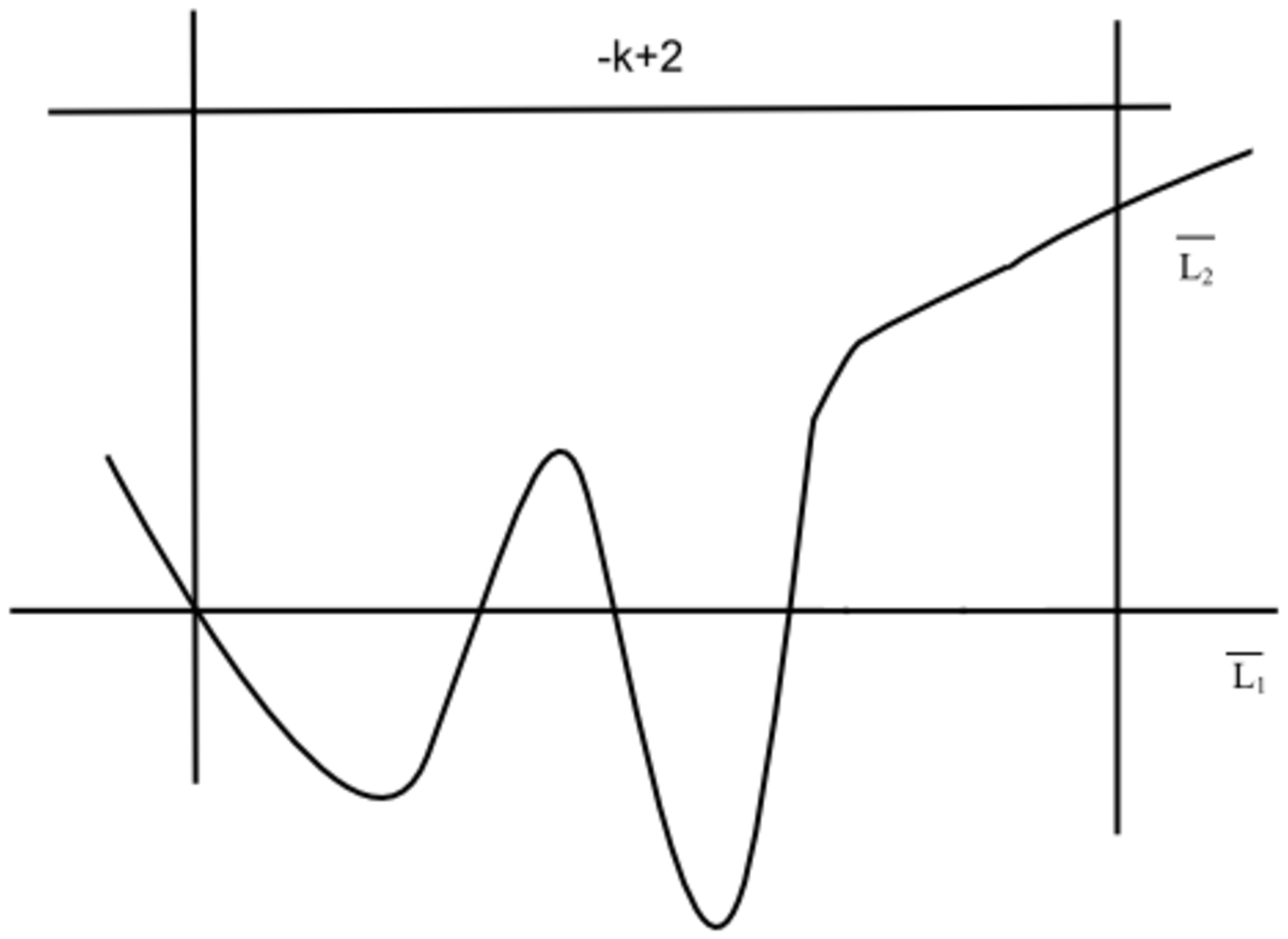}\caption{Intermediate step, resolution of the crossings, to obtain a divisor
in $\mathbb{F}_{k-2}$.}
\end{minipage}\hfill{}%
\begin{minipage}[b][1\totalheight][t]{0.45\columnwidth}%
\includegraphics[scale=0.45]{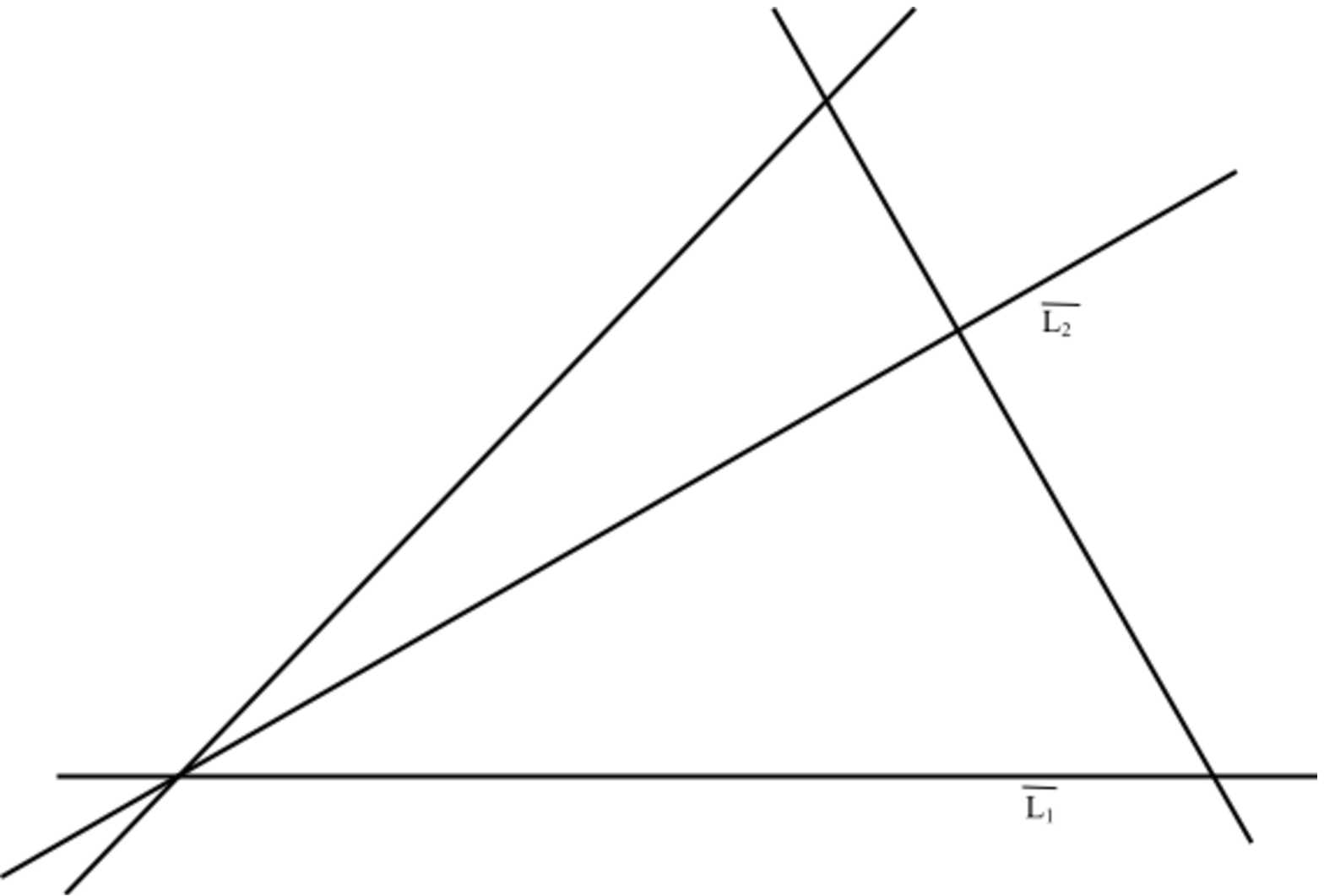}\caption{Final resolution, to obtain a divisor in $\mathbb{P}^{2}$}
\end{minipage}
\end{figure}

This resolution gives a birational map from the A-H quotient of $X$
to the A-H quotient of $\mathbb{A}^{3}$ which induces an isomorphism
in a neighborhood of the origin of $\mathbb{A}^{2}$. By Theorem \ref{Theorem equivalence}
this gives a $\mathbb{G}_{m}$-equivariant isomorphism between an
open neighborhood of the origin in $X$ and on open neighborhood of
the origin in $\mathbb{A}^{3}$. 

Let $p(v)=v(1+g(v))$ be the polynomial which appears in the statement,
and let $\phi$ be the birational map defined by: 
\[
\phi:(u,v)\rightarrow(-u'(g(v'+u')+1),v'+u').
\]
Its inverse is defined by 
\[
\phi^{-1}:(u',v')\rightarrow(-\frac{u}{1+g(v)},v+\frac{u}{1+g(v)}).
\]

\begin{flushleft}
Then $\phi(u+p(v))=v'(g(v'+u')+1)$ and we obtain :
\par\end{flushleft}
\[
\xymatrix{Y(\mathbb{A}^{n}) & ^{i} & Y'=\tilde{\mathbb{A}}_{(u,v^{d})}^{2}\setminus V(1+g(v))\simeq\tilde{\mathbb{A}}_{(u',v'^{d})}^{2}\setminus V(g(v'+u')+1)\ar@{^{(}->}[rr]\ar@{_{(}->}[ll] & ^{i'} & Y(X)}
\]

\begin{flushleft}
and $i:Y'\hookrightarrow\tilde{\mathbb{A}}_{(u,v^{d})}^{2}$, then
$\mathbb{S}(Y',i^{*}(\mathcal{D}))=U$ is an equivariant open neighborhood
of the fixed point in $X$, which is moreover equivariantly isomorphic
to an open subset of $\mathbb{A}^{3}=\mathrm{Spec}(\mathbb{C}[Y,Z,T])$
endowed of the hyperbolic $\mathbb{G}_{m}$-action. The action on
$\mathbb{A}^{3}$ is defined by $\lambda\cdot(Y,Z,T)=(\lambda^{-\alpha_{2}\alpha_{3}}Y,\lambda^{d\alpha_{3}}Z,\lambda^{\alpha_{2}}T)$
using Proposition \ref{classification A3}. 
\par\end{flushleft}

\end{proof}
\noindent
\begin{rem}
In the particular case where $L_{1}+L_{2}$ is not a smooth normal
crossing divisor in $\mathbb{A}^{2}$, that is the point 2 of the
Theorem \ref{theoreme famiille unif} is not satisfied, but the crossing
of $L_{1}$ and $L_{2}$ at the origin is transversal, then $\mathbb{S}(Y,\mathcal{D})$
is equivariantly isomorphic to a normal but not smooth $\mathbb{G}_{m}$-variety
$V$ with a unique fixed point contained in its regular locus. The
same process as before can be applied, and the variety $V$ admits
an open $\mathbb{G}_{m}$-stable neighborhood of the fixed point isomorphic
to a $\mathbb{G}_{m}$-stable neighborhood of the fixed point of $\mathbb{A}^{3}$
endowed with a linear hyperbolic $\mathbb{G}_{m}$-action.

In other words $V$ is $\mathbb{G}_{m}$-linearly rational, but not
uniformly rational, since it is singular.
\end{rem}

\subsubsection{Applications}

Specifying the coefficients of the polynomial $p\in\mathbb{C}[v]$
defined in the previous sub-section, we list below particular hypersurfaces
of $\mathbb{A}^{4}$ which are $\mathbb{G}_{m}$-uniformly rational.
\begin{prop}
\label{cover-1}The following hypersurfaces in $\mathbb{A}^{4}=\mathrm{Spec}(\mathbb{C}[x,y,z,t])$
are \emph{$\mathbb{G}_{m}$-}linearly rational:

\[
X_{1}=\{x+x^{k}y^{k-1}+z^{\alpha_{^{2}}}+t^{\alpha_{3}}=0\},
\]
 
\[
X_{2}=\{x+y^{d-1}(x^{d}+z^{\alpha_{^{2}}})+t^{\alpha_{3}}=0\},
\]

considering the equivariant isomorphisms $\psi_{1}$ and $\psi_{2}$,
respectively, in the proof.
\end{prop}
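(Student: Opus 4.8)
The plan is to exhibit both $X_{1}$ and $X_{2}$ as members of the family treated in Theorem \ref{theoreme famiille unif}, for suitable choices of $d$ and of the polynomial $p$, so that the proposition reduces to a direct application of that theorem together with the explicit birational map constructed in its proof. Writing $p(v)=v(1+g(v))$, so that $p(xy)/y=x(1+g(xy))$, I would match the equations as follows. For $X_{1}$ I take $d=1$ and $p(v)=v+v^{k}$, i.e. $g(v)=v^{k-1}$, which gives
\[
y^{d-1}z^{\alpha_{2}}+t^{\alpha_{3}}+p(xy)/y=z^{\alpha_{2}}+t^{\alpha_{3}}+x+x^{k}y^{k-1},
\]
that is exactly $X_{1}$; the coprimality hypothesis $\gcd(d\alpha_{3},\alpha_{2})=1$ of the theorem becomes $\gcd(\alpha_{2},\alpha_{3})=1$. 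For $X_{2}$ I keep $d$ arbitrary and take $p(v)=v+v^{d}$, i.e. $g(v)=v^{d-1}$, which gives
\[
y^{d-1}z^{\alpha_{2}}+t^{\alpha_{3}}+p(xy)/y=x+y^{d-1}(x^{d}+z^{\alpha_{2}})+t^{\alpha_{3}},
\]
that is $X_{2}$, the coprimality hypothesis being unchanged.

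Next I would check the smoothness hypothesis, i.e. point 2) of Theorem \ref{theoreme famiille unif}: the divisor $L_{1}+L_{2}=\{u=0\}+\{u+p(v)=0\}$ must be SNC. This amounts to the nonzero roots of $p(v)/v$ being simple. For $X_{1}$ one has $p(v)/v=1+v^{k-1}$ and for $X_{2}$ one has $p(v)/v=1+v^{d-1}$, whose roots are the distinct $(k-1)$-th, respectively $(d-1)$-th, roots of $-1$; hence both divisors are SNC and both hypersurfaces are smooth.

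Point 3) of Theorem \ref{theoreme famiille unif} then yields that $X_{1}$ and $X_{2}$ are $\mathbb{G}_{m}$-linearly rational at the origin. To record the equivariant isomorphisms explicitly, I would substitute the chosen $g$ into the birational map $\phi$ of the theorem's proof, obtaining
\[
\psi_{1}:(u,v)\mapsto\big(-u'((v'+u')^{k-1}+1),\,v'+u'\big),\qquad\psi_{2}:(u,v)\mapsto\big(-u'((v'+u')^{d-1}+1),\,v'+u'\big),
\]
which realize the desired equivariant open embeddings of neighborhoods of the origin of $X_{1}$, resp. $X_{2}$, into $\mathbb{A}^{3}$ equipped with its linear hyperbolic $\mathbb{G}_{m}$-action.

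Rather than a genuine obstacle, the only point requiring care is the parameter bookkeeping: one must notice that $X_{1}$ corresponds to the boundary value $d=1$, so that the coefficient $y^{d-1}$ of $z^{\alpha_{2}}$ disappears and the exponent $k$ in $x^{k}y^{k-1}$ records the degree of $p$, whereas $X_{2}$ keeps $d$ general with $k=d$. Once the identifications and the SNC check are in place no further work is needed, since the resolution of the pair $(\mathbb{P}^{2},\bar{L}_{1}+\bar{L}_{2})$ and the equivariance have already been established in Theorem \ref{theoreme famiille unif}.
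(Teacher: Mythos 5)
Your proposal is correct and follows essentially the same route as the paper: identify $X_{1}$ with the choice $d=1$, $p(v)=v+v^{k}$ and $X_{2}$ with $d\geq 2$, $p(v)=v+v^{d}$ in Theorem \ref{theoreme famiille unif}, verify the SNC condition via the simplicity of the roots of $p$, and invoke point 3) of that theorem. The only difference is that the paper records $\psi_{1},\psi_{2}$ as explicit equivariant isomorphisms on the threefolds themselves, e.g.\ $\psi_{1}:X_{1}\setminus V(1+(xy)^{k-1})\rightarrow\mathbb{A}^{3}\setminus V(1+(YZ^{\alpha_{2}}+YT^{\alpha_{3}})^{k-1})$ in the ambient coordinates, rather than only the induced birational map on the A-H quotient $\mathbb{A}^{2}$ as you do; these explicit formulas are what gets descended through the $\mu_{k-1}$-quotient in the proof of Theorem \ref{The-Koras-Russell-threefolds gm rational-1}.
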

\begin{proof}
Applying Theorem \ref{theoreme famiille unif}, $X_{1}$ corresponds
to the choice $d=1$ and $p(v)=v+v^{k}$, and $X_{2}$ corresponds
to the choice $d\geq2$ and $p(v)=v+v^{d}$.

1) An explicit isomorphism $\psi_{1}:X_{1}\setminus V(1+(xy)^{d-1})\rightarrow\mathbb{A}^{3}\setminus V(1+(YZ^{\alpha_{2}}+YT^{\alpha_{3}})^{d-1})$
is given by: 

\[
\psi_{1}:\left(\begin{array}{c}
x\\
y\\
z\\
t
\end{array}\right)\rightarrow\left(\begin{array}{c}
Y\\
Z\\
T
\end{array}\right)=\left(\begin{array}{c}
\frac{-y}{1+(xy)^{d-1}}\\
z\\
t
\end{array}\right).
\]

\begin{flushleft}
Its inverse $\psi_{1}^{-1}$ is given by: 
\par\end{flushleft}
\[
\psi_{1}^{-1}:\left(\begin{array}{c}
Y\\
Z\\
T
\end{array}\right)\rightarrow\left(\begin{array}{c}
x\\
y\\
z\\
t
\end{array}\right)=\left(\begin{array}{c}
-\frac{Z^{\alpha_{2}}+T^{\alpha_{3}}}{1+(YZ^{\alpha_{2}}+YT^{\alpha_{3}})^{d-1}}\\
-Y(1+(YZ^{\alpha_{2}}+YT^{\alpha_{3}})^{d-1})\\
Z\\
T
\end{array}\right)
\]

2) An explicit isomorphism $\psi_{2}:X_{2}\setminus V(1+(xy)^{d-1})\rightarrow\mathbb{A}^{3}\setminus V(1+(Y^{d}Z^{\alpha_{2}}+YT^{\alpha_{3}})^{d-1})$
is given by:

\[
\psi_{2}:\left(\begin{array}{c}
x\\
y\\
z\\
t
\end{array}\right)\rightarrow\left(\begin{array}{c}
Y\\
Z\\
T
\end{array}\right)=\left(\begin{array}{c}
\frac{-y}{1+(xy)^{d-1}}\\
z\\
t
\end{array}\right).
\]

\begin{flushleft}
Its inverse $\psi_{2}^{-1}$ is given by:
\par\end{flushleft}
\[
\psi_{2}^{-1}:\left(\begin{array}{c}
Y\\
Z\\
T
\end{array}\right)\rightarrow\left(\begin{array}{c}
x\\
y\\
z\\
t
\end{array}\right)=\left(\begin{array}{c}
-Y^{d-1}Z^{\alpha_{2}}-\frac{T^{\alpha_{3}}}{1+(Y^{d}Z^{\alpha_{2}}+YT^{\alpha_{3}})^{d-1}}\\
-Y(1+(YZ^{d\alpha_{2}}+YT^{\alpha_{3}})^{d-1})\\
Z\\
T
\end{array}\right)
\]
\end{proof}
\begin{thm}
\label{The-Koras-Russell-threefolds gm rational-1}All Koras-Russell
threefolds of the first kind $\{x+x^{k}y+z^{\alpha_{^{2}}}+t^{\alpha_{3}}=0\}$
in $\mathbb{A}^{4}=\mathrm{Spec}(\mathbb{C}[x,y,z,t])$ are $\mathbb{G}_{m}$-linearly
uniformly rational, considering the equivariant isomorphism $\psi$
in the proof.
\end{thm}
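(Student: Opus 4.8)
The plan is to realize the Koras-Russell threefold of the first kind as a member of the family treated in Theorem \ref{theoreme famiille unif}, and then apply the explicit computation already carried out there. The threefold $\{x+x^{k}y+z^{\alpha_{2}}+t^{\alpha_{3}}=0\}$ must be matched against the normal form $\{y^{d-1}z^{\alpha_{2}}+t^{\alpha_{3}}+p(xy)/y=0\}$. The natural choice is $d=1$, so that the factor $y^{d-1}=1$ disappears and the term $p(xy)/y$ becomes a genuine polynomial in $x$ and $y$; concretely I would take $p(v)=v+v^{k+1}$ (or the appropriate shift making $p(xy)/y = x + x^{k+1}y^{k}$ type term), chosen so that $p(xy)/y$ reproduces exactly $x+x^{k}y$ after clearing the $1/y$. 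The main bookkeeping step is to verify that with this $d$ and $p$, the defining equation of $X$ from the theorem coincides with the Koras-Russell equation, and that the coprimality hypothesis $\gcd(d\alpha_{3},\alpha_{2})=\gcd(\alpha_{3},\alpha_{2})=1$ is exactly the standard Koras-Russell condition that $\alpha_{2}$ and $\alpha_{3}$ be coprime.

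Once the identification is made, I would check the three conclusions of Theorem \ref{theoreme famiille unif} in turn. For smoothness (point 2), the divisor $L_{1}+L_{2}=\{u=0\}+\{u+p(v)=0\}$ must be SNC in $\mathbb{A}^{2}$; with $p(v)=v(1+g(v))$ this is exactly the condition that the roots $\alpha_{i}$ of the associated factorization be distinct, which is automatic for the Koras-Russell polynomial, and this matches the well-known fact that these threefolds are smooth. The hyperbolic $\mathbb{G}_{m}$-action is the one induced by $\lambda\cdot(x,y,z,t)=(\lambda^{\alpha_{2}\alpha_{3}}x,\lambda^{-\alpha_{2}\alpha_{3}}y,\lambda^{d\alpha_{3}}z,\lambda^{\alpha_{2}}t)$ with $d=1$, which should be recognized as (a rescaling of) the standard Koras-Russell action with unique fixed point the origin.

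With point 1 giving the A-H presentation and point 3 giving $\mathbb{G}_{m}$-linear rationality at the origin, it remains to upgrade rationality \emph{at the fixed point} to \emph{uniform} rationality, i.e.\ to produce an invariant affine chart around \emph{every} point. Here I would invoke Definition \ref{definition equivariant}(iv): since the action has a unique fixed point $x_{0}$, every point other than $x_{0}$ lies in the open set where $\mathbb{G}_{m}$ acts with finite stabilizers, and away from the fixed locus local triviality of the action together with Sumihiro's theorem gives invariant affine neighborhoods equivariantly isomorphic to open subsets of affine space with a linear action; thus $\mathbb{G}_{m}$-linear rationality at $x_{0}$ combined with $\mathbb{G}_{m}$-linear rationality at every free point yields $\mathbb{G}_{m}$-linear uniform rationality. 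The explicit isomorphism $\psi$ referenced in the statement is then precisely the specialization of $\psi_{1}$ from Proposition \ref{cover-1} (the case $d=1$, $p(v)=v+v^{k}$) composed with the identification above, written down in coordinates.

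The step I expect to be the genuine obstacle is the \emph{exact} matching of exponents and the choice of $p$: one must be careful that the substitution $p(xy)/y$ with a single power $v^{k+1}$ reproduces $x^{k}y$ with the correct exponent $k$ (shifting degree by one because of the division by $y$), and that the grading weights line up so the induced action is the linear Koras-Russell action rather than merely some hyperbolic action. This is a finite computation but the indexing is delicate, and getting the coprimality translation $\gcd(\alpha_{2},\alpha_{3})=1 \Leftrightarrow \gcd(d\alpha_{3},\alpha_{2})=1$ right (using $d=1$) is what makes the hypotheses of Theorem \ref{theoreme famiille unif} applicable.
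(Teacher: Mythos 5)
Your proposal founders on the very step you flag as delicate: the Koras--Russell threefold of the first kind is \emph{not} a member of the family of Theorem \ref{theoreme famiille unif} for $k>2$. In that family the non-monomial part of the equation is $p(xy)/y=\sum_{j}c_{j}x^{j}y^{j-1}$, so the only monomials available are of the form $x^{j}y^{j-1}$; the term $x^{k}y$ occurs only when $k=2$. No choice of $p$ (whether $v+v^{k}$, $v+v^{k+1}$, or anything else) produces $x+x^{k}y$ for general $k$. The paper circumvents this by working with the auxiliary variety $X_{1}=\{x+x^{k}y^{k-1}+z^{\alpha_{2}}+t^{\alpha_{3}}=0\}$ (the case $d=1$, $p(v)=v+v^{k}$ of Proposition \ref{cover-1}), which \emph{is} in the family, and observing that $X_{1}$ carries a $\mu_{k-1}$-action $\epsilon\cdot(x,y,z,t)=(x,\epsilon y,z,t)$ factoring through the $\mathbb{G}_{m}$-action, with $X_{1}/\!/\mu_{k-1}\simeq X$ and $\mathbb{A}^{3}/\!/\mu_{k-1}\simeq\mathbb{A}^{3}$; the explicit isomorphism $\psi_{1}$ then descends to the quotient to give the chart $\psi$ on $X\setminus V(1+yx^{k-1})$. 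This cyclic-cover-and-descent step is the essential idea missing from your argument.

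The second gap is your treatment of points away from the fixed point. The claim that finite stabilizers plus Sumihiro's theorem automatically yield invariant neighborhoods equivariantly isomorphic to open subsets of affine space with a linear action is not a theorem and is not what Definition \ref{definition equivariant}(iv) says (that definition merely restricts attention to the fixed point when one speaks of $G$-rationality of a variety with a unique fixed point; it does not dispense with checking the other points for \emph{uniform} rationality). The paper instead exhibits a second explicit chart: on $\mathcal{U}=X_{x}$ one solves for $y$, getting $\Gamma(\mathcal{U},\mathcal{O}_{\mathcal{U}})\simeq\mathbb{C}[x,x^{-1},z,t]$, so $\mathcal{U}$ is equivariantly an invariant principal open subset of $\mathbb{A}^{3}$ with a linear action, and $\mathcal{U}\cup\mathcal{V}$ covers $X$ since $x=0$ forces $1+yx^{k-1}=1\neq0$. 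You would need to supply this (or an equivalent) explicit chart to close the argument.
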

\begin{proof}
Let $X=\{x+x^{k}y+z^{\alpha_{^{2}}}+t^{\alpha_{3}}=0\}$ be a Koras-Russell
threefold of the first kind, let $\mathcal{U}$ be the principal open
subset of $X$ where $x$ does not vanish and let $\mathcal{V}$ be
the principal open subset of $X$ where $1+yx^{d-1}$ does not vanish.
The principal open subsets $\mathcal{U}=X_{x}$ and $\mathcal{V}=X_{1+yx^{d-1}}$
form a covering of $X$ by $\mathbb{G}_{m}$-stable open subsets. 

Since $\varGamma(\mathcal{U},\mathcal{O_{U}})=\mathbb{C}[x,x^{-1},y,z,t]/(x+x^{k}y+z^{\alpha_{^{2}}}+t^{\alpha_{3}})\simeq\mathbb{C}[x,x^{-1},z,t]$,
$X$ is $\mathbb{G}_{m}$-linearly rational at every point of $\mathcal{U}$.

By Proposition \ref{cover-1}, we have an explicit $\mathbb{G}_{m}$-equivariant
isomorphism between an open neighborhood of the fixed point in $X_{1}=\{x+x^{k}y^{k-1}+z^{\alpha_{^{2}}}+t^{\alpha_{3}}=0\}$
and an open subset of $\mathbb{A}^{3}$. Moreover $X_{1}$ admits
an action of the cyclic group $\mu_{k-1}$ given by $\epsilon\cdot(x,y,z,t)\rightarrow(x,\epsilon y,z,t)$
such that the action of $\mu_{k-1}$ factor through that of $\mathbb{G}_{m}$.
Thus the quotient for the action of the cyclic group and the isomorphism
obtained in Proposition \ref{cover-1} commute. In this case the quotient
of $\mathbb{A}^{3}$ for the action of $\mu_{k-1}$ is still isomorphic
to $\mathbb{A}^{3}$. Since $X_{1}/\!/\mu_{k-1}\simeq X$, the $\mathbb{G}_{m}$-equivariant
map $\psi_{1}$ given in Proposition \ref{cover-1} descends to a
$\mathbb{G}_{m}$-equivariant isomorphism $\psi$:

\[
\xymatrix{X_{1}\setminus V(1+(xy)^{k-1})\ar[rr]^{\psi}\ar[d]^{/\!/\mu_{k-1}} &  & \mathbb{A}^{3}\setminus V(1+(YZ^{\alpha_{2}}+YT^{\alpha_{3}})^{k-1})\ar[d]^{/\!/\mu_{d-1}}\\
X\setminus V(1+yx^{k-1})\ar[rr]^{\psi} &  & \mathbb{A}^{3}\setminus V(1+Y(Z^{\alpha_{2}}+T^{\alpha_{3}})).
}
.
\]
\\
\end{proof}
\begin{rem}
The variety $X$ is endowed with an hyperbolic $\mathbb{G}_{m}$-action,
the $\mathbb{G}_{m}$-stable principal open subset $\mathcal{V}=X_{1+yx^{d-1}}$
is isomorphic to a principal open subset of $\mathbb{A}^{3}$ endowed
with an hyperbolic $\mathbb{G}_{m}$-action but the $\mathbb{G}_{m}$-stable
principal open subset $\mathcal{U}=X_{x}$ is isomorphic to a principal
open subset of $\mathbb{A}^{3}$ endowed with a $\mathbb{G}_{m}$-action
with positive weights only.
\end{rem}
\begin{prop}
The Koras-Russell threefolds of the second kind given by the equations
\[
X=\{x+y(x^{d}+z^{\alpha_{^{2}}})^{l}+t^{\alpha_{3}}=0\},
\]
 in $\mathbb{A}^{4}=\mathrm{Spec}(\mathbb{C}[x,y,z,t])$ with $l=1$
or $l=2$ or $d=2$ are $\mathbb{G}_{m}$-linearly uniformly rational.
\end{prop}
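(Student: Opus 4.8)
The plan is to realize each of these Koras-Russell threefolds of the second kind as a member of the family treated in Theorem \ref{theoreme famiille unif}, or as an equivariant cyclic quotient of such a member, so that the $\mathbb{G}_m$-linear rationality at the fixed point follows directly, and then to cover the complement of the fixed point by principal open sets of the form $X_x$ on which the coordinate ring trivializes to that of a Laurent polynomial ring. Concretely, I would first rewrite $X=\{x+y(x^d+z^{\alpha_2})^l+t^{\alpha_3}=0\}$ after the substitution that exhibits it as an affine modification, matching it to the normal form $\{y^{d-1}z^{\alpha_2}+t^{\alpha_3}+p(xy)/y=0\}$ of the preceding subsection. For $l=1$ this is essentially $X_2$ of Proposition \ref{cover-1} (choice $p(v)=v+v^d$), so the fixed-point rationality is already furnished by the explicit isomorphism $\psi_2$; the remaining cases $l=2$ and $d=2$ require producing the appropriate polynomial $p$ and checking that the associated plane curve configuration $L_1+L_2$ is SNC, so that condition 2) of Theorem \ref{theoreme famiille unif} holds and the variety is smooth.

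The second step is to handle $\mathbb{G}_m$-\emph{uniform} (not merely fixed-point) rationality. Following the proof of Theorem \ref{The-Koras-Russell-threefolds gm rational-1}, I would exhibit a covering of $X$ by two $\mathbb{G}_m$-stable principal open subsets: the locus $\mathcal{U}=X_x$ where $x\neq 0$, and a locus $\mathcal{V}$ where the relevant unit of $\psi_2$ (a factor of the form $1+(\text{monomial in }xy)^{d-1}$) is invertible. On $\mathcal{U}$ one inverts $x$ and uses the defining equation to solve for $y$, obtaining $\Gamma(\mathcal{U},\mathcal{O}_\mathcal{U})\simeq\mathbb{C}[x,x^{-1},z,t]$, which makes $X$ $\mathbb{G}_m$-linearly rational at every point of $\mathcal{U}$; on $\mathcal{V}$ one invokes the fixed-point isomorphism from step one. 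Since every point of $X$ lies in $\mathcal{U}\cup\mathcal{V}$ and the fixed point lies in $\mathcal{V}$, this yields $\mathbb{G}_m$-linear uniform rationality.

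The point requiring the most care is the passage from the family of Theorem \ref{theoreme famiille unif}, whose members have exponent $y^{d-1}$, to the Koras-Russell threefolds of the second kind, whose $y$ appears to the first power. As in the proof of Theorem \ref{The-Koras-Russell-threefolds gm rational-1}, I expect to introduce an auxiliary variety $\tilde X$ in the Theorem \ref{theoreme famiille unif} family carrying an action of a finite cyclic group $\mu_m$ (for a suitable $m$ depending on $d$ and $l$) which factors through the $\mathbb{G}_m$-action and whose quotient $\tilde X /\!/ \mu_m$ recovers $X$. One then checks that the equivariant isomorphism $\psi_2$ of Proposition \ref{cover-1} commutes with this $\mu_m$-action and that $\mathbb{A}^3/\!/\mu_m\simeq\mathbb{A}^3$, so that $\psi_2$ descends to a $\mathbb{G}_m$-equivariant isomorphism between an open neighborhood of the fixed point of $X$ and an open subset of $\mathbb{A}^3$.

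The main obstacle is precisely verifying, for each of the three admissible cases $l=1$, $l=2$, and $d=2$, that a single polynomial $p(v)$ and a compatible $\mu_m$-action can be arranged so that (i) the resulting plane divisor $L_1+L_2$ is SNC, guaranteeing smoothness, and (ii) the cyclic cover recovers exactly the given second-kind equation rather than a different member of the family; the remaining cases are excluded presumably because no such compatible choice exists, which is why the statement restricts to these three. Once this combinatorial matching is settled, the rest is the routine bookkeeping already carried out in Proposition \ref{cover-1} and Theorem \ref{The-Koras-Russell-threefolds gm rational-1}.
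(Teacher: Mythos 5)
Your reduction works for $l=1$, where the cyclic cover $\{x+y^{d-1}(x^{d}+z^{\alpha_{2}})+t^{\alpha_{3}}=0\}$ is literally the variety $X_{2}$ of Proposition \ref{cover-1} (i.e.\ a member of the family of Theorem \ref{theoreme famiille unif} with $p(v)=v+v^{d}$), and this is exactly what the paper does. But for $l=2$ and $d=2$ there is a genuine gap: these cases cannot be fitted into the family of Theorem \ref{theoreme famiille unif} by any choice of $p$. The cyclic cover $X_{dl-1}=\{x+y^{dl-1}(x^{d}+z^{\alpha_{2}})^{l}+t^{\alpha_{3}}=0\}$ has A-H presentation supported on the pair $L_{1}=\{v+(u+v^{d})^{l}=0\}$, $L_{2}=\{u=0\}$, and for $l\geq2$ the curve $L_{1}$ is not the graph of a polynomial $u=-p(v)$, so condition 3) of Theorem \ref{theoreme famiille unif} (whose proof rests on the ``one axis plus one graph'' structure of $L_{1}+L_{2}$) does not apply. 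Checking that $L_{1}+L_{2}$ is SNC, as you propose, only gives smoothness; it says nothing about whether the pair is birationally rectifiable, and Section \ref{sec:4} shows this is a genuinely delicate point --- Example \ref{non rectification-1} uses the very curve $\{u+(v+u^{2})^{2}=0\}$, paired with a different $\mathbb{A}^{1}$-curve, to produce a pair that is \emph{not} rectifiable. So ``no compatible $p$ exists'' is not a reason these cases were excluded from the statement; they are included in the statement and require a different argument.

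The paper's missing ingredient is twofold: first, an automorphism of $\mathbb{A}^{2}$ ($u\mapsto u-(v-u^{l})^{d}$, $v\mapsto v-u^{l}$) exchanging the roles of $d$ and $l$, which reduces $d=2$ to $l=2$; second, for $l=2$, an explicit birational endomorphism $\varphi$ of $\mathbb{A}^{2}$, namely $u\mapsto u(1+(v-u^{2})^{2d-1})/(1-u(v-u^{2})^{d-1})$, $v\mapsto v-u^{2}$, which rectifies the pair $L_{1}+L_{2}$ to $\{uv=0\}$ and is moreover equivariant for the $\mu_{2d-1}$-action $\epsilon\cdot(u,v)=(\epsilon^{d}u,\epsilon v)$, so that the resulting isomorphism descends to $X$. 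Your proposal contains no candidate for this rectification. A secondary slip: for the second kind the coefficient of $y$ is $(x^{d}+z^{\alpha_{2}})^{l}$ rather than a power of $x$, so on $X_{x}$ you cannot solve for $y$ and $\Gamma(\mathcal{U},\mathcal{O}_{\mathcal{U}})$ is not $\mathbb{C}[x,x^{-1},z,t]$; the open set complementary to the fixed-point chart has to be chosen by inverting $x^{d}+z^{\alpha_{2}}$ (or otherwise adapted), not copied verbatim from the first-kind case.
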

\begin{proof}
In the case $l=1$ we consider the $\mathbb{G}_{m}$-uniformly rational
variety: 
\[
X_{2}=\{x+y^{d-1}(x^{d}+z^{\alpha_{^{2}}})+t^{\alpha_{3}}=0\},
\]
 given in Proposition \ref{cover-1}. The cyclic group $\mu_{d-1}$
on $X_{2}$ via $\epsilon\cdot(x,y,z,t)\rightarrow(x,\epsilon y,z,t)$,
and this action factors through that of $\mathbb{G}_{m}$. Thus the
quotient for the action of cyclic group and the isomorphism obtains
in Proposition \ref{cover-1} commute. The conclusion follows by the
same method as in the proof of Theorem \ref{The-Koras-Russell-threefolds gm rational-1}. 

\noindent

Let $X_{d-1}=\{x+y^{dl-1}(x^{d}+z^{\alpha_{^{2}}})^{l}+t^{\alpha_{3}}=0\}\rightarrow X$
be the cyclic cover of order $dl-1$ of $X$ branched along the divisor
$\{y=0\}$. The A-H presentation of $X_{d-1}$(see \cite{P1}) is
$\mathbb{S}(\tilde{\mathbb{A}}_{(u,v^{d})}^{2},\mathcal{D})$ with:
\[
\mathcal{D}=\left\{ \frac{a}{\alpha_{2}}\right\} D_{\alpha_{3}}+\left\{ \frac{b}{\alpha_{3}}\right\} D_{\alpha_{2}}+\left[0,\frac{1}{\alpha_{2}\alpha_{3}}\right]E,
\]
where $E$ is the exceptional divisor of the blow-up $\pi:\tilde{\mathbb{A}}_{(u,v^{d})}^{2}\rightarrow\mathbb{A}^{2}\simeq\mathrm{Spec}(\mathbb{C}[u,v])\simeq\mathrm{Spec}(\mathbb{C}[y^{d}z^{\alpha_{2}},yx])$
, and where $D_{\alpha_{2}}$ and $D_{\alpha_{3}}$ are the strict
transforms of the curves $L_{1}=\left\{ v+(u+v^{d})^{l})=0\right\} $
and $L_{2}=\left\{ u=0\right\} $ in $\mathbb{A}^{2}=\mathrm{Spec}(\mathbb{C}[u,v])$
respectively, $(a,b)\in\mathbb{Z}^{2}$, being chosen so that $ad\alpha_{3}+b\alpha_{2}=1$. 

First of all, variables $l$ and $d$ can be exchanged, just considering
the automorphism of $\mathbb{A}^{2}=\mathrm{Spec}(\mathbb{C}[u,v])$
which send $u$ on $u-(v-u^{l})^{d}$ and $v$ on $v-u^{l}$. Then
$v+(u+v^{d})^{l})$ is sent on $v$. From now it will be assume that
$l=2$.

By showing that $X_{d-1}$ is $\mathbb{G}_{m}$-linearly rational,
one can explicit a birational map between $X$ and $\mathbb{A}^{3}$.
This map will be an equivariant isomorphism between an open subset
of $X$ containing the fixed point and an open subset of $\mathbb{A}^{3}$.
The divisor $D=L_{1}+L_{2}$ is birationally equivalent to $D'=\{uv=0\}$
via be the birational endomorphism $\varphi$ of $\mathbb{A}^{2}=\mathrm{Spec}(\mathbb{C}[u,v])$
defined by $u\rightarrow\frac{u(1+(v-u^{2})^{2d-1})}{1-u(v-u^{2})^{d-1}}$
and $v\rightarrow v-u^{2}$. Thus $X_{d-1}$ is $\mathbb{G}_{m}$-linearly
rational. Moreover the application $\varphi$ is $\mu_{2d-1}$-equivariant,
considering the action of $\mu_{2d-1}$ given by $\epsilon\cdot(u,v)\rightarrow(\epsilon^{d}u,\epsilon v)$.
The desired result is now obtained by the same technique as in Theorem
\ref{The-Koras-Russell-threefolds gm rational-1}.
\end{proof}

\section{\label{sec:4}Examples of non $\mathbb{G}_{m}$-rational varieties }

\noindent

Since the property to be $G$-uniformly rational is more restrictive
than being only uniformly rational, it is not surprising that there
are smooth and rational $G$-varieties which are not $G$-uniformly
rational. In this section we will exhibit some $\mathbb{G}_{m}$-varieties
which are smooth and rational but not not $\mathbb{G}_{m}$-uniformly
rational. However it is not known if these varieties are uniformly
rational. This provides candidates to show that the uniform rationality
conjecture has a negative answer.
\begin{prop}
\label{prop: curve genus}Let $C\subset\mathbb{A}^{2}$ be a smooth
affine curve of positive genus passing through the origin with multiplicity
one and let $X$ be a $\mathbb{G}_{m}$-variety equivariantly isomorphic
to $\mathbb{S}(\tilde{\mathbb{A}}_{(u,v)}^{2},\mathcal{D})$ with
$\mathcal{D}=\left\{ \frac{1}{p}\right\} D+[0,\frac{1}{p}]E$, where
$E$ is the exceptional divisor of the blow-up and $D$ is the strict
transform of $C$. Then $X$ is a smooth rational $\mathbb{G}_{m}$-variety
but not a $\mathbb{G}_{m}$-uniformly rational variety.
\end{prop}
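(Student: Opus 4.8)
The plan is to prove the two halves of the statement separately: smoothness and rationality follow from the general theory and a Jacobian-type verification, while the failure of $\mathbb{G}_{m}$-uniform rationality is where the positive genus of $C$ must be exploited. For smoothness, I would first use the Altmann--Hausen construction to recognize $X=\mathbb{S}(\tilde{\mathbb{A}}^{2}_{(u,v)},\mathcal{D})$ as a concrete affine variety, and check directly (e.g. via an embedding as a $\mathbb{G}_{m}$-stable hypersurface in an affine space, as in Remark~\ref{ps-divisor construction}) that the only potential singularity sits over the fixed point; the hypothesis that $C$ passes through the origin with multiplicity one guarantees that its strict transform $D$ meets $E$ transversally in a single smooth point, which is exactly the SNC condition that forces $X$ to be smooth. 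Rationality is immediate: the A-H quotient $\tilde{\mathbb{A}}^{2}_{(u,v)}$ is a rational surface, so $X$, being birational to $\tilde{\mathbb{A}}^{2}_{(u,v)}\times\mathbb{A}^{1}$, is rational.

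The substance of the proposition is the negative statement, and here I would argue by contradiction through Theorem~\ref{Theorem equivalence}. Suppose $X$ were $\mathbb{G}_{m}$-uniformly rational; since $X$ has a unique fixed point $x_{0}$, this means $X$ is $\mathbb{G}_{m}$-rational at $x_{0}$, so by Theorem~\ref{Theorem equivalence} the pair $(\tilde{\mathbb{A}}^{2}_{(u,v)},\hat{\mathcal{D}})$ must be birationally equivalent to the pair $(Y',\hat{\mathcal{D}'})$ coming from some linear(izable) hyperbolic $\mathbb{G}_{m}$-action on $\mathbb{A}^{n}$. Because both intervals $\{\tfrac{1}{p}\}D$ and $[0,\tfrac{1}{p}]E$ have support meeting $E$, we have $\hat{\mathcal{D}}=\mathcal{D}$ here, so the relevant reduced boundary divisor on the A-H quotient is $D+E$, i.e.\ the strict transform of $C$ together with the exceptional curve. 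By Proposition~\ref{classification A3}, the A-H presentation of $\mathbb{A}^{n}$ (here $\mathbb{A}^{3}$) has boundary divisor supported on strict transforms of the two coordinate \emph{lines} and the exceptional divisor. Birational equivalence of pairs in the sense of Definition~\ref{pars equivalence} means there is a surface $Z$ dominating both A-H quotients such that the strict transforms of the respective boundary divisors coincide.

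The key point is that birational equivalence of pairs preserves the geometric genus of each irreducible boundary component: a proper birational morphism of smooth surfaces induces a birational isomorphism of any given curve onto its strict transform, and birational curves have the same geometric genus. The component $D$, being the strict transform of the smooth affine curve $C$ of positive genus, has positive geometric genus, whereas every component of the boundary of the A-H presentation of $\mathbb{A}^{3}$ is rational (a line, or the $\mathbb{P}^{1}$ exceptional divisor, has genus zero). Under a birational equivalence of the two boundary configurations, the positive-genus component $D$ would have to correspond to a genus-zero component on the $\mathbb{A}^{3}$ side, which is impossible. This contradiction shows that no such birational equivalence can exist, so $X$ is not $\mathbb{G}_{m}$-rational at $x_{0}$, hence not $\mathbb{G}_{m}$-uniformly rational.

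The main obstacle, and the step I would take most care with, is making the genus-invariance argument airtight at the level of \emph{pairs}: I must ensure that the positive-genus component $D$ is genuinely part of $\hat{\mathcal{D}}$ and cannot be contracted or made irrelevant, and that the correspondence of components forced by Definition~\ref{pars equivalence} really pairs $D$ with a single component on the target side rather than, say, splitting it. To this end I would verify that $D$ meets $E$, so it survives into $\hat{\mathcal{D}}$, and invoke the fact that strict transforms under birational morphisms between smooth surfaces are birationally equivalent curves, so geometric genus is a well-defined invariant of each boundary component of the equivalence class of the pair. Once this invariance is established, the genus obstruction closes the argument cleanly.
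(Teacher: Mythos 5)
Your proposal is correct and follows essentially the same route as the paper: the paper likewise invokes Proposition~\ref{classification A3} to note that every irreducible component of the boundary of an A-H presentation of $\mathbb{A}^{3}$ is rational, observes that $D$ (hence a non-rational component) necessarily appears in the support of the ps-divisor of $X$, and concludes via Theorem~\ref{Theorem equivalence} and Theorem~\ref{GLUR=00003DGUR} that no birational equivalence of pairs can exist. Your write-up merely makes the genus-invariance step and the smoothness/rationality verifications more explicit than the paper does.
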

\begin{proof}
This is a direct consequence of the classification of hyperbolic $\mathbb{G}_{m}$-actions
on $\mathbb{A}^{3}$ given in Proposition \ref{classification A3}.
In this case the irreducible components of the support of the ps-divisors
are all rational. But the variety $\mathbb{S}(\tilde{\mathbb{A}}_{(u,v)}^{2},\mathcal{D})$
given in \cite[proposition 3.1]{P1} admits the support of $D$ in
the support of its ps-divisors. As the support of $D$ is not rational
it follows that the varieties obtained by this construction are not
$\mathbb{G}_{m}$-linearly rational and thus not $\mathbb{G}_{m}$-uniformly
rational since the two properties are equivalent in the case of $\mathbb{G}_{m}$-varieties
of complexity two (see Theorem \ref{GLUR=00003DGUR}).
\end{proof}
\begin{example}
Let $V(h)$ be a smooth affine curve of positive genus passing through
the origin with multiplicity one. Then the hypersurface $\{h(xy,zy)/y+t^{p}=0\}$,
is stable in $\mathbb{A}^{4}=\mathrm{Spec}(\mathbb{C}[x,y,z,t])$,
for the linear $\mathbb{G}_{m}$-action given by $\lambda\cdot(x,y,z,t)=(\lambda^{p}x,\lambda^{-p}y,\lambda^{p}z,\lambda t)$.
This variety is smooth using the Jacobian criterion and rational as
its algebraic quotient is rational but not $\mathbb{G}_{m}$-uniformly
rational. 
\end{example}

\subsection{Numerical obstruction for rectifiability of curves}

For a $\mathbb{G}_{m}$-variety $\mathbb{S}(Y,\mathcal{D})$, the
non-rationality of the irreducible components of the support of $\mathcal{D}$
(see Proposition \ref{prop: curve genus}) is not the only obstruction
to being $\mathbb{G}_{m}$-rational. There exist divisors $D=L_{1}+L_{2}$
where $L_{i}$ is isomorphic to $\mathbb{A}^{1}$ for $i=1,2$ and
such that $D$ is not birationally equivalent to $D'=\{uv=0\}$. Such
$D$ can be used to construct $\mathbb{G}_{m}$-variety $\mathbb{S}(Y,\mathcal{D})$
where the irreducible components of the support of the ps-divisors
are all rational and such that $\mathbb{S}(Y,\mathcal{D})$ is not
$\mathbb{G}_{m}$-rational. To prove the existence of such $D$, we
will use an invariant, the \emph{Kumar-Murthy dimension} (see \cite{Ku-Mu}).
Recall that a pair $(X,D)$ is said \emph{smooth }if $X$ is a smooth
projective surface and $D$ is a SNC divisor on $X$. For every divisor
$D$ on a smooth projective variety, we define the \emph{Iitaka dimension,
}$\kappa(X,D):=sup\,dim\phi_{\left|nD\right|}(X)$ in the case where
$\left|nD\right|\neq\emptyset$ for some $n$, and $\kappa(X,D):=-\infty$
otherwise\emph{, }where $\phi_{\left|nD\right|}:X\dashrightarrow\mathbb{P}^{N}$
is the rational map associated to the linear system $\left|nD\right|$
on $X$.

\noindent
\begin{lem}
Let $D_{0}=\sum_{i=1}^{k}D_{i}$ be a reduced divisor on a complete
surface $X_{0}$, with $D_{i}$ irreducible for each $i\geq0$. For
any birational morphism $\pi:X\rightarrow X_{0}$ such that the pair
$(X,D_{X})$ is smooth, with $D_{X}$ the strict transform of $D$,
the value $\kappa(X,2K_{X}+D_{X})$ does not depend on the choice
of $\pi$.
\end{lem}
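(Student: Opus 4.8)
The plan is to show that $\kappa(X, 2K_X + D_X)$ is a birational invariant by comparing any two resolutions through a common dominating one, and to control how $2K_X + D_X$ transforms under a single blow-up. First I would reduce to the case of two resolutions $\pi_1 : X_1 \to X_0$ and $\pi_2 : X_2 \to X_0$ that are dominated by a third smooth model $\rho : W \to X_0$ with birational morphisms $\sigma_i : W \to X_i$; since any two such resolutions admit a common smooth refinement (by resolving the graph of $\sigma_2 \circ \sigma_1^{-1}$ and applying embedded resolution of singularities to keep the total transform of $D_0$ SNC), it suffices to prove $\kappa(X_i, 2K_{X_i} + D_{X_i}) = \kappa(W, 2K_W + D_W)$ for each $i$ separately. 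This is the standard "straddle via a common blow-up" technique for comparing Iitaka dimensions.

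The heart of the argument is therefore a local computation under a single blow-up $\sigma : W \to X$ of a smooth projective surface at a point $p$, with exceptional curve $F$ and $D_W$ the strict transform of the SNC divisor $D_X$. I would write the pullback $\sigma^*(2K_X + D_X)$ in terms of the intrinsic data on $W$: one has $K_W = \sigma^* K_X + F$ and $\sigma^* D_X = D_W + m F$, where $m = \mathrm{mult}_p(D_X) \in \{0, 1, 2\}$ because $D_X$ is SNC (the multiplicity is $0$ if $p \notin D_X$, $1$ if $p$ lies on exactly one component, and $2$ at a node). Hence $\sigma^*(2K_X + D_X) = (2K_W + D_W) + (2 - m) F$, so that $2K_W + D_W = \sigma^*(2K_X + D_X) - (2 - m)F$ with $2 - m \geq 0$. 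The point is that for each SNC configuration the correction term $(2-m)F$ is an \emph{effective} divisor supported on the $\sigma$-exceptional locus, and moreover $2K_W + D_W$ differs from the pullback of $2K_X + D_X$ only by this effective exceptional divisor.

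The Iitaka dimension is then compared by the following two inclusions of linear systems, valid because $F$ is $\sigma$-exceptional. On the one hand, since $(2-m)F \geq 0$, the natural inclusion $H^0(X, n(2K_X + D_X)) = H^0(W, n\,\sigma^*(2K_X+D_X)) \hookrightarrow H^0(W, n(2K_W + D_W) + n(2-m)F)$ shows $\kappa(X, 2K_X + D_X) \leq \kappa(W, 2K_W + D_W)$ is not automatic in this direction, so instead I would argue via the projection formula $\sigma_* \mathcal{O}_W(n(2K_W + D_W)) = \sigma_* \mathcal{O}_W(n\,\sigma^*(2K_X+D_X) - n(2-m)F)$; because $(2-m)F$ is effective and exceptional, pushing forward a divisor differing from a pullback by an effective exceptional divisor leaves the space of global sections unchanged, giving $H^0(W, n(2K_W+D_W)) \cong H^0(X, n(2K_X + D_X))$ for all $n$. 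This isomorphism of all pluricanonical-type section spaces forces equality of the Iitaka dimensions, which is precisely what is claimed.

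The main obstacle I anticipate is justifying the section-space isomorphism cleanly across all three SNC multiplicity cases $m \in \{0,1,2\}$ simultaneously, i.e. checking that the exceptional correction $(2-m)F$ is effective in every case (it is, since $m \leq 2$ by the SNC hypothesis) and that effectiveness together with exceptionality suffices to conclude $\sigma_* \mathcal{O}_W(-n(2-m)F \otimes \sigma^* \mathcal{L}) = \mathcal{L}^{\otimes n}$ with no stray sections appearing. The decisive role of the coefficient $2$ in $2K_X + D_X$ is exactly that it makes $2 - \mathrm{mult}_p(D_X) \geq 0$ at nodes of the SNC divisor; a smaller coefficient would fail here, and this is the one place where the SNC hypothesis and the specific divisor $2K_X + D_X$ are genuinely used.
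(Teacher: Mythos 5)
Your overall strategy is the same as the paper's: reduce to a single point blow-up (the paper invokes Zariski's strong factorization, while you pass through a common dominating smooth model, which for surfaces amounts to the same thing) and then compare section spaces via the projection formula, using that the SNC hypothesis bounds the multiplicity of $D_X$ at any point by $2$. However, there is a sign error at the heart of your computation, and as literally written the key step fails. From $K_W=\sigma^*K_X+F$ and $\sigma^*D_X=D_W+mF$ one gets
\[
2K_W+D_W \;=\; \sigma^*(2K_X+D_X)\;+\;(2-m)F,
\]
not $2K_W+D_W=\sigma^*(2K_X+D_X)-(2-m)F$ as in your display. With your sign, you end up computing $\sigma_*\mathcal{O}_W\bigl(n\,\sigma^*(2K_X+D_X)-n(2-m)F\bigr)$; since $F$ is effective and $\sigma$-exceptional, this push-forward is $\mathcal{O}_X(n(2K_X+D_X))$ twisted by a power of the ideal sheaf of the center, so its global sections are only those sections of $n(2K_X+D_X)$ vanishing to order $n(2-m)$ at $p$ --- in general a proper subspace --- and the claimed isomorphism of section spaces does not follow. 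The correct sign puts you in the situation your prose actually describes: $2K_W+D_W$ is a pullback \emph{plus} an effective exceptional divisor, and then $\sigma_*\mathcal{O}_W(kF)=\mathcal{O}_X$ for $k\ge 0$ together with the projection formula gives $H^0(W,n(2K_W+D_W))\cong H^0(X,n(2K_X+D_X))$ for all $n$, hence equality of Iitaka dimensions. This is exactly the paper's argument.

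Once the sign is fixed, the rest of what you write is sound: the case analysis $m\in\{0,1,2\}$ is precisely where the SNC hypothesis enters, and your closing remark that the coefficient $2$ in $2K_X+D_X$ is what keeps $2-m\ge 0$ at nodes is the right observation. One small point to make explicit in your reduction step: after each intermediate blow-up the strict transform of $D_0$ must again be SNC so that the bound $m\le 2$ applies at every stage; this holds because the strict transform of an SNC divisor under a point blow-up of a smooth surface is again SNC.
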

\begin{proof}
By the Zariski strong factorization Theorem, it suffices to show that
this dimension is invariant under blow-ups. Let $(X,D_{X})$ be a
resolution of the pair $(X_{0},D_{0})$ such that $X$ is smooth and
$D_{X}$ is SNC. Let $\pi:\tilde{X}\rightarrow X$ be the blow-up
of a point $p$ in $X$. Since $D_{X}$ is SNC, there are three possible
cases: $p\notin D_{X}$, $p$ is contained in a unique irreducible
component of $D_{X}$, or $p$ is a point of intersection of two irreducible
components $D_{X}$. We have then for any integer $n$: $n(2K_{\tilde{X}}+D_{\tilde{X}})=\pi^{*}(n(2K_{X}+D_{X}))+n(2-m)E,\:m=2,1,0$
respectively. Therefore, 
\[
\Gamma(X,\mathcal{O}(n(2K_{\tilde{X}}+D_{\tilde{X}})))=\Gamma(X,\mathcal{O}(\pi^{*}(n(2K_{X}+D_{X})+(2-m)E)))=\Gamma(X,\mathcal{O}(\pi^{*}(n(2K_{X}+D_{X})))),
\]
 and so, by the projection formula (\cite[II.5]{Har}), $\Gamma(X,\mathcal{O}(\pi^{*}(n(2K_{X}+D_{X}))))\simeq\Gamma(X,\mathcal{O}(n(2K_{X}+D_{X})))$
for any integer $n$.
\end{proof}
\begin{defn}
The \emph{Kumar-Murthy dimension} $k_{M}(X_{0},D_{0})$ of ($X_{0},D_{0})$
is the Iitaka dimension $\kappa(X,2K_{X}+D_{X})$ where $\pi:X\rightarrow X_{0}$
is any birational morphism such that the pair $(X,D_{X})$ is smooth.
\end{defn}
\noindent
\begin{defn}
A pair $(Y,D)$ (as in definition \ref{pars equivalence}) is \emph{birationally
rectifiable }if it is birationally equivalent to the union of $k\leq N=\mathrm{dim}(Y)$
general hyperplanes in $\mathbb{P}^{N}$. Note in particular that
$Y$ is rational and that the irreducible components of $D$ are either
rational or uniruled. 
\end{defn}
Since, the Kumar-Murthy dimension of the pair $(\mathbb{P}^{2},D)$,
where $D$ is a union of two distinct lines, is equal to $-\infty$,
we obtain:
\begin{prop}
\label{condition dimension}If a reduced divisor $D=D_{1}+D_{2}$
in $\mathbb{P}^{2}$ is birationally rectifiable then $k_{M}(\mathbb{P}^{2},D)=-\infty$.
\end{prop}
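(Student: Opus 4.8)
The plan is to show that the Kumar--Murthy dimension is a birational invariant of pairs, so that if $D=D_1+D_2$ is birationally rectifiable then $k_M(\mathbb{P}^2,D)$ agrees with the value computed on the model where $D$ becomes a union of two lines. First I would observe that by the previous lemma, $k_M(\mathbb{P}^2,D)=\kappa(X,2K_X+D_X)$ is well defined, computed on any smooth resolution $\pi:X\to\mathbb{P}^2$ of the pair $(\mathbb{P}^2,D)$ making $D_X$ a SNC divisor. The key point is that this quantity depends only on the birational equivalence class of the pair in the sense of Definition \ref{pars equivalence}: given two pairs $(Y_i,D_i)$ joined by proper birational morphisms $\varphi_i:Z\to Y_i$ with equal strict transforms, one passes to a common smooth SNC model $Z'\to Z$ dominating both, where the lemma's invariance under blow-ups forces the Iitaka dimensions $\kappa(2K+D_X)$ computed over each $Y_i$ to coincide.

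The hypothesis that $(\mathbb{P}^2,D)$ is birationally rectifiable means precisely that it is birationally equivalent (as a pair) to $(\mathbb{P}^2,L_1+L_2)$ where $L_1,L_2$ are two distinct lines in $\mathbb{P}^2$. By the invariance established above, it therefore suffices to compute the Kumar--Murthy dimension of this reference pair. Two distinct lines form a SNC divisor already on $\mathbb{P}^2$, so no resolution is needed and one computes directly $\kappa(\mathbb{P}^2,2K_{\mathbb{P}^2}+L_1+L_2)$. Since $K_{\mathbb{P}^2}=-3H$ for $H$ a line and $L_1+L_2\sim 2H$, we get $2K_{\mathbb{P}^2}+L_1+L_2\sim(-6+2)H=-4H$, which is anti-ample; hence no positive multiple admits a nonzero global section, giving $\kappa=-\infty$.

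The main obstacle in this argument is making the birational invariance of $k_M$ fully rigorous across an arbitrary birational equivalence of pairs, rather than just the single blow-up handled by the lemma. The subtle point is that Definition \ref{pars equivalence} only requires the strict transforms to coincide on some intermediate variety $Z$, whereas the lemma is phrased for birational morphisms to a fixed base; I would bridge this gap by taking a sufficiently high smooth SNC model dominating $Z$ and appealing to the Zariski strong factorization theorem to reduce any such birational comparison to a sequence of blow-ups and blow-downs along SNC loci, at each of which the lemma's computation $n(2K_{\tilde X}+D_{\tilde X})=\pi^*(n(2K_X+D_X))+n(2-m)E$ preserves the space of sections via the projection formula. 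Once this invariance is in place, the conclusion $k_M(\mathbb{P}^2,D)=-\infty$ is immediate from the elementary anti-ampleness computation above, which is why I expect the only real work to lie in the invariance step and not in the final numerics.
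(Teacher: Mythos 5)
Your proof is correct and follows essentially the same route as the paper, which deduces the proposition from the single observation that for two distinct lines $2K_{\mathbb{P}^2}+L_1+L_2\sim -4H$ admits no effective multiple, hence $k_M(\mathbb{P}^2,L_1+L_2)=-\infty$, with the birational invariance of $k_M$ across equivalent pairs left implicit as a consequence of the preceding lemma. Your explicit bridging of that invariance step, by passing to a smooth SNC model dominating the common variety $Z$ and invoking Zariski strong factorization, merely fills in details the paper takes for granted.
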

\begin{example}
\label{non rectification-1} Let $C=\{u+(v+u^{2})^{2}=0\}$ and $C'=\{\alpha v(v-\beta)+u=0\}$
be two curves in $\mathbb{A}^{2}=\mathrm{Spec}(\mathbb{C}[u,v])$
where $(\alpha,\beta)\in\mathbb{C}^{2}$ are generic parameters chosen
such that $C$ and $C'$ intersect normally. Let $\bar{C}$ and $\bar{C}'$
be the closures in $\mathbb{P}^{2}$ of $C$ and $C'$ respectively
and let $D=\bar{C}+\bar{C}'$. Then:

i) $C$ and $C'$ are isomorphic to $\mathbb{A}^{1}$ 

ii) $k_{M}(\mathbb{P}^{2},D)\neq-\infty$. 
\end{example}
\begin{proof}
The curve $C'$ is clearly isomorphic to $\mathbb{A}^{1}$. In the
case of $C$, consider the following two automorphisms:

$\psi_{1}:(u,v)\rightarrow(u,v+u^{2})$ and $\psi_{2}:(u,v)\rightarrow(u+v^{2},v)$
then the composition \foreignlanguage{english}{$\psi_{2}\circ\psi_{1}:\begin{cases}
u & \rightarrow u+(u+v^{2})^{2}\\
v & \rightarrow v+u^{2}
\end{cases}$} sends $C$ on a coordinate axe. A minimal log-resolution $\pi:S_{7}\rightarrow\mathbb{P}^{2}$
of $\bar{C}\cup\bar{C'}$ is obtained by performing a sequence of
seven blow-ups, five of them with centers lying over the singular
point of $\bar{C}$ and the remaining two over the singular point
of $\bar{C'}$.

\selectlanguage{french}%
\begin{figure}[H]
\includegraphics[scale=0.45]{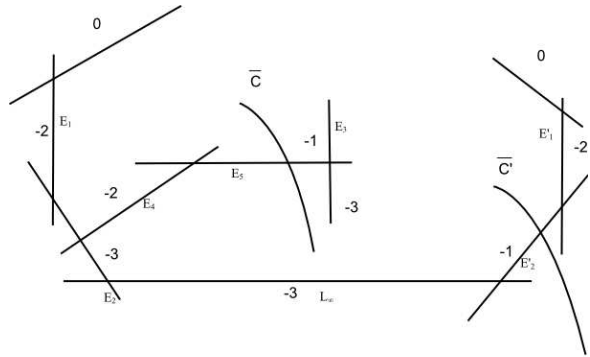}

\selectlanguage{american}%
\caption{Resolution of $(\mathbb{P}^{2},(\bar{C}+\bar{C'})$, the divisors
$E_{i}$ and $E_{i}'$ are exceptional divisors obtained blowing-up
$\bar{C}\cap L_{\infty}$ and $\bar{C}'\cap L_{\infty}$ numbered
according to the order of their extraction. }
\selectlanguage{french}%
\end{figure}

\selectlanguage{american}%
Using the ramification formula for the successive blow-ups occurring
in $\pi$, we find that the canonical divisor of $S_{7}$ is equal
to $K_{S_{7}}=-3l+E_{1}+2E_{2}+3E_{3}+6E_{4}+10E_{5}+E'_{1}+2E'_{2}$,
where $l$ denotes the proper transform of a general line in $\mathbb{P}^{2}$.
The total transform of the divisor $\bar{C}+\bar{C'}$ is equal to
$\pi^{*}(\bar{C}+\bar{C'})=\bar{C}+2E_{1}+4E_{2}+6E_{3}+11E_{4}+18E_{5}+\bar{C'}+E'_{1}+2E'_{2}$,
where we have identified $\bar{C}$ and $\bar{C}'$ with their proper
transforms in $S_{7}$.

Since $\bar{C}$ is of degree $4$ and $\bar{C'}$ is of degree $2$,
the proper transform of $\bar{C}+\bar{C}'$ in $S_{7}$ is linearly
equivalent to $6l$ and we obtain 
\[
2K_{S_{7}}+D=2K_{S_{7}}+\pi^{*}(\bar{C}+\bar{C'})-(2E_{1}+4E_{2}+6E_{3}+11E_{4}+18E_{5}+E'_{1}+2E'_{2})=E_{4}+2E_{5}+E'_{1}+2E'_{2},
\]

\begin{flushleft}
which is an effective divisor. Thus $k_{M}(\mathbb{P}^{2},D)\neq-\infty$,
and by Proposition \ref{condition dimension}, $D$ is not birationally
rectifiable.
\par\end{flushleft}
\end{proof}

\subsection{Application }

Let $X$ be the subvariety of $\mathbb{A}^{5}=\mathrm{Spec}(\mathbb{C}[w,x,y,z,t])$
defined by the two equations $\{w+y(x+yw^{2})^{2}+t^{\alpha_{3}}=0\}$
and $\{\alpha x(yx-\beta)+w+z^{\alpha_{2}})=0\}$, where $(\alpha,\beta)\in\mathbb{C}^{2}$
are the same parameters as in Example \ref{non rectification-1}.
This variety is endowed with a hyperbolic $\mathbb{G}_{m}$-action
induced by the linear one on $\mathbb{A}^{5}$, $\lambda\cdot(w,x,y,z,t)=(\lambda^{\alpha_{2}\alpha_{3}}w,\lambda^{\alpha_{2}\alpha_{3}}x,\lambda^{-\alpha_{2}\alpha_{3}}y,\lambda^{\alpha_{3}}z,\lambda^{\alpha_{2}}t)$.
Moreover is equivariantly isomorphic to the hypersurface in $\mathbb{A}^{4}=\mathrm{Spec}(\mathbb{C}[x,y,z,t])$
defined by $\{z^{\alpha_{2}}-\alpha x(xy-\beta)+y(x+y(z^{\alpha_{2}}-\alpha x(xy-\beta))^{2})^{2}+t^{\alpha_{3}}=0\}$.
\begin{thm}
The threefold $X$ is a smooth rational $\mathbb{G}_{m}$-variety
but not a $\mathbb{G}_{m}$-uniformly rational variety. 
\end{thm}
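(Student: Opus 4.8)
The plan is to combine the structural results established earlier in the article with the explicit computations carried out in the previous subsections. First I would verify that the threefold $X$ is smooth and rational. Smoothness follows from the Jacobian criterion applied to the two defining equations in $\mathbb{A}^{5}$, or equivalently from the single hypersurface equation in $\mathbb{A}^{4}$; rationality follows because the algebraic quotient $X /\!/ \mathbb{G}_{m}$ is rational and $X$ is generically a $\mathbb{G}_{m}$-bundle over it. The point here is that the linear $\mathbb{G}_{m}$-action on $\mathbb{A}^{5}$ restricts to a hyperbolic action on $X$ with a unique fixed point at the origin, so the machinery of Theorem \ref{Theorem equivalence} applies.

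Next I would compute the Altmann--Hausen presentation of $X$, following the method of Remark \ref{ps-divisor construction} exactly as in the proof of Theorem \ref{theoreme famiille unif} and in the Koras--Russell computations. The expected outcome is that $X$ is equivariantly isomorphic to $\mathbb{S}(\tilde{\mathbb{A}}^{2}_{(u,v)},\mathcal{D})$ for a ps-divisor $\mathcal{D}$ whose horizontal support consists of the strict transforms of the two curves $C=\{u+(v+u^{2})^{2}=0\}$ and $C'=\{\alpha v(v-\beta)+u=0\}$ from Example \ref{non rectification-1}, together with the exceptional divisor $E$ of the blow-up at the origin. This is the technical heart of the argument and is precisely where the two defining equations were engineered: the substitutions relating $(w,x,y,z,t)$ to $(u,v)=(yx,\ldots)$ should produce exactly the pair $(\bar{C},\bar{C'})$ on the quotient surface. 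Both $C$ and $C'$ are isomorphic to $\mathbb{A}^{1}$, so every irreducible component of $\operatorname{Supp}(\hat{\mathcal{D}})$ is rational, which rules out the genus obstruction of Proposition \ref{prop: curve genus}.

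Then I would invoke Theorem \ref{Theorem equivalence}: $X$ is $\mathbb{G}_{m}$-rational if and only if the pair $(Y,\hat{\mathcal{D}})$ is birationally equivalent to the corresponding pair $(Y',\hat{\mathcal{D}'})$ coming from a hyperbolic $\mathbb{G}_{m}$-action on $\mathbb{A}^{3}$. By Proposition \ref{classification A3}, the target pair has horizontal support equal to the strict transforms of the two coordinate axes, so $\mathbb{G}_{m}$-rationality of $X$ would force the pair $(\mathbb{P}^{2}, \bar{C}+\bar{C'})$ to be birationally rectifiable in the sense of Definition \ref{pars equivalence} and the rectifiability definition. But Example \ref{non rectification-1} shows that $k_{M}(\mathbb{P}^{2}, \bar{C}+\bar{C'}) \neq -\infty$, so by Proposition \ref{condition dimension} the divisor $D=\bar{C}+\bar{C'}$ is \emph{not} birationally rectifiable. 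Consequently the two pairs cannot be birationally equivalent, and $X$ fails to be $\mathbb{G}_{m}$-rational at the fixed point; since complexity-two $\mathbb{G}_{m}$-uniform rationality and linear uniform rationality coincide by Theorem \ref{GLUR=00003DGUR}, $X$ is not $\mathbb{G}_{m}$-uniformly rational.

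The main obstacle I anticipate is the careful bookkeeping in the second step: one must check that the explicit change of variables passing from the hypersurface in $\mathbb{A}^{4}$ to the A-H quotient really yields the curves $C$ and $C'$ of Example \ref{non rectification-1} on the nose, and that the birational-equivalence criterion of Theorem \ref{Theorem equivalence} translates into rectifiability of the \emph{closures} $\bar{C}+\bar{C'}$ in $\mathbb{P}^{2}$ rather than some auxiliary configuration. A subtle point is that birational equivalence of pairs in Theorem \ref{Theorem equivalence} is phrased via strict transforms under common resolutions, whereas the Kumar-Murthy obstruction is computed on $\mathbb{P}^{2}$; I would need to confirm that rectifiability to the coordinate axes of $\mathbb{A}^{2}$ is equivalent to rectifiability to two general lines in $\mathbb{P}^{2}$, which is the content built into the definition of birational rectifiability and the $-\infty$ baseline value of $k_{M}$ for two lines. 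Once these identifications are pinned down, the non-rationality is an immediate consequence of the numerical obstruction.
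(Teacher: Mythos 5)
Your proposal is correct and follows essentially the same route as the paper: compute the A-H presentation of $X$ so that the horizontal support of the ps-divisor is the pair of curves $C$, $C'$ of Example \ref{non rectification-1}, reduce $\mathbb{G}_{m}$-rationality at the fixed point via Theorem \ref{Theorem equivalence} and Proposition \ref{classification A3} to birational rectifiability of $\bar{C}+\bar{C'}$, and conclude from the Kumar--Murthy obstruction of Proposition \ref{condition dimension}. The only cosmetic difference is that the paper obtains the presentation by exhibiting the $\mu_{\alpha_{2}}\times\mu_{\alpha_{3}}$-action and the two $\mathbb{A}^{3}$-quotients (citing \cite{P1}) rather than by the direct toric embedding you describe.
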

\begin{proof}
The A-H presentation of $X$ is given by $\mathbb{S}(\tilde{\mathbb{A}}_{(u,v)}^{2},\mathcal{D})$
with 
\[
\mathcal{D}=\left\{ \frac{a}{\alpha_{2}}\right\} D{}_{1}+\left\{ \frac{b}{\alpha_{3}}\right\} D{}_{2}+\left[0,\frac{1}{\alpha_{2}\alpha_{3}}\right]E,
\]
where $E$ is the exceptional divisor of the blow-up $\pi:\tilde{\mathbb{A}}_{(u,v)}^{2}\rightarrow\mathbb{A}^{2}$,
$D_{1}$ and $D_{2}$ are the strict transform of the curves $C$
and $C'$ of Example \ref{non rectification-1}, and $(a,b)\in\mathbb{Z}^{2}$
are such that $a\alpha_{3}+b\alpha_{2}=1$. The presentation comes
from the fact that $X$ is endowed with an action of $\mu_{\alpha_{2}}\times\mu_{\alpha_{3}}$
factoring through that of $\mathbb{G}_{m}$ and given by $(\epsilon,\xi)\cdot(x,y,z,t)\rightarrow(x,y,\epsilon z,\xi t)$.
\[
\xymatrix{ & X\ar[dr]\ar[dl]\\
X/\!/\mu_{\alpha_{2}} &  & X/\!/\mu_{\alpha_{3}}
}
\]

\begin{flushleft}
By \cite[Example 3.1]{P1}, $X/\!/\mu_{\alpha_{2}}$ is equivariantly
isomorphic to $\mathbb{S}(\tilde{\mathbb{A}}_{(u,v)}^{2},\left\{ \frac{1}{\alpha_{3}}\right\} D{}_{2}+\left[0,\frac{1}{\alpha_{3}}\right]E)$
and $X/\!/\mu_{\alpha_{3}}$ is equivariantly isomorphic to $\mathbb{S}(\tilde{\mathbb{A}}_{(u,v)}^{2},\left\{ \frac{1}{\alpha_{2}}\right\} D{}_{1}+\left[0,\frac{1}{\alpha_{2}}\right]E)$.
In fact, $X/\!/\mu_{\alpha_{2}}$ is equivariantly isomorphic to $\mathbb{A}^{3}=\mathrm{Spec}(\mathbb{C}[x,y,t])$
with the $\mathbb{G}_{m}$-action defined via $\lambda\cdot(x,y,t)=(\lambda^{\alpha_{3}}x,\lambda^{-\alpha_{3}}y,\lambda t)$
and $X/\!/\mu_{\alpha_{3}}$ is equivariantly isomorphic to $\mathbb{A}^{3}=\mathrm{Spec}(\mathbb{C}[x,y,z])$
with the $\mathbb{G}_{m}$-action defined via $\lambda\cdot(x,y,z)=(\lambda^{\alpha_{2}}x,\lambda^{-\alpha_{2}}y,\lambda z)$.
In particular $X$ is a Koras-Russell threefolds (see \cite{K-R,Ka-K-ML-R,P1}).
Now the result follows from Proposition \ref{condition dimension}
and Example \ref{non rectification-1}.
\par\end{flushleft}
\end{proof}

\section{\label{sec:5}Weak equivariant rationality}

\noindent

The property to be $G$-linearly uniformly rational is very restrictive.
We will now introduce a weaker notion:
\begin{defn}
A $G$-variety $X$ is called \emph{weakly $G$-rational} at a point\emph{
$x$} if there exist an open $G$-stable neighborhood $U_{x}$ of
$x$, an open subvariety $V$ of $\mathbb{A}^{n}$ equipped with a
$G$-action and a $G$-equivariant isomorphism between $U_{x}$ and
$V$. We said that $X$ is\emph{ weakly} \emph{$G$-uniformly rational}
if it is\emph{ }weakly $G$-rational\emph{ }at every point.
\end{defn}
Note that, in contrast with Definition \ref{definition equivariant}
ii) we only require that $V\subset\mathbb{A}^{n}$ is endowed with
a $G$-action, in particular it need not be the restriction of a $G$-action
on $\mathbb{A}^{n}$. In summary we have a sequence of implications
between these different notions of $G$-rationality: $G$-linearly
uniformly rational implies $G$-uniformly rational which implies $G$-weakly
uniformly rational, which finally implies uniformly rational.
\begin{thm}
Let $S\subset\mathbb{A}^{3}=\mathrm{Spec}(\mathbb{C}[x,y,z])$ be
the surface defined by the equation $z^{2}+y^{2}+x^{3}-1=0$, equipped
with the $\mu_{2}$-action $\tau\cdot(x,y,z)\rightarrow(x,y,-z)$
on $\mathbb{A}^{3}$, where $\tau$ is the non trivial element of
$\mu_{2}$. Then $S$ is weakly $\mu_{2}$-uniformly rational but
not $\mu_{2}$-uniformly rational.
\end{thm}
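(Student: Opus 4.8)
The plan is to work throughout on the equivariantly isomorphic model obtained from the linear change of coordinates $u=y+\sqrt{-1}\,z$, $w=y-\sqrt{-1}\,z$, which turns the equation into $S'=\{uw=1-x^{3}\}\subset\mathbb{A}^{3}_{(x,u,w)}$ and the involution $\tau$ into the swap $(x,u,w)\mapsto(x,w,u)$. First I would record the elementary facts: $S$ is smooth (the gradient $(3x^{2},2y,2z)$ vanishes only at the origin, which is not on $S$) and rational (the open set $\{u\neq0\}$ of $S'$ is $\mathbb{A}^{1}\times\mathbb{G}_{m}$, with $w=(1-x^{3})/u$). The decisive observation is that the fixed locus is $S^{\mu_{2}}=\{z=0\}\cap S=\{u=w\}\cap S'$, i.e. the affine curve $C=\{y^{2}+x^{3}=1\}$, a smooth irreducible curve whose smooth completion is an elliptic curve; its \emph{non-rationality} is the single fact driving both halves of the statement. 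Note also that $S/\!/\mu_{2}=\mathrm{Spec}\,\mathbb{C}[x,y,z]^{\mu_{2}}\cong\mathbb{A}^{2}$ and that $C$ is exactly the ramification locus of the double cover $S\to\mathbb{A}^{2}$.

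For the negative half I would argue by contradiction. Assuming $S$ is $\mu_{2}$-rational at a point $c\in C$ in the sense of Definition \ref{definition equivariant}(ii), I get a $\mu_{2}$-stable open $U_{c}\ni c$, an action of $\mu_{2}$ on some $\mathbb{A}^{n}$, a stable open $U'\subset\mathbb{A}^{n}$ and an equivariant isomorphism $\varphi\colon U_{c}\to U'$; comparing dimensions forces $n=2$. Since every regular action of a finite group on $\mathbb{A}^{2}$ is linearisable, any $\mu_{2}$-action on $\mathbb{A}^{2}$ is conjugate to a linear involution, whose fixed locus $(\mathbb{A}^{2})^{\mu_{2}}$ is a linear subspace. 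The action on $U'$ is non-trivial because $U_{c}$ is a surface not contained in the curve $C$, so $(\mathbb{A}^{2})^{\mu_{2}}$ is a point or a line and $U'\cap(\mathbb{A}^{2})^{\mu_{2}}$ is rational of dimension at most one. But $\varphi$ restricts to an isomorphism of fixed loci, identifying this with $U_{c}\cap C$, a non-empty open subset of the genus-one curve $C$, which is non-rational; an open of an elliptic curve cannot be isomorphic to an open of a line (their completions have different genus). This contradiction shows $S$ is not $\mu_{2}$-rational at any point of $C$.

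For the positive (weak) half I would exploit that the weak notion only requires the target open subset of $\mathbb{A}^{n}$ to carry \emph{some} $\mu_{2}$-action, obtained by transporting the action through the isomorphism and not required to extend to $\mathbb{A}^{n}$. Writing $1-x^{3}=(x-\zeta)h_{\zeta}(x)$ for each cube root of unity $\zeta$, with $h_{\zeta}\in\mathbb{C}[x]$ of degree two and $h_{\zeta}(\zeta)\neq0$, I consider the principal open set $U_{\zeta}:=S'_{h_{\zeta}(x)}$, which is $\mu_{2}$-stable since $h_{\zeta}(x)$ is $\tau$-invariant. On $U_{\zeta}$ the regular function $w'=w/h_{\zeta}(x)$ satisfies $u\,w'=x-\zeta$, whence
\[
x=\zeta+u\,w',\qquad w=w'\,h_{\zeta}(\zeta+u\,w'),
\]
so that $U_{\zeta}$ is isomorphic to the open subset $\{(u,w')\in\mathbb{A}^{2}\mid h_{\zeta}(\zeta+u\,w')\neq0\}$ of $\mathbb{A}^{2}$. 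Transporting $\tau$ through this isomorphism makes the target an open subvariety of $\mathbb{A}^{2}$ equipped with a $\mu_{2}$-action, so $S$ is weakly $\mu_{2}$-rational at every point of $U_{\zeta}$. Finally the three charts $U_{1},U_{\omega},U_{\omega^{2}}$ cover $S$: a point lying over a value of $x$ that is not a cube root of unity lies in all three, while a point over $x=\zeta$ lies in $U_{\zeta}$ because $h_{\zeta}(\zeta)\neq0$. This gives weak $\mu_{2}$-uniform rationality.

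The main obstacle, and the heart of the argument, is the second construction: $C$ is the ramification locus of $S\to\mathbb{A}^{2}$ and, as the first half shows, cannot be covered by $\mu_{2}$-stable charts on which $\tau$ is the restriction of a \emph{linear} involution of $\mathbb{A}^{2}$. The Danielewski-type presentation $\{uw=1-x^{3}\}$ is what circumvents this: solving $u\,w'=x-\zeta$ for $x$ absorbs a single branch fibre into a coordinate, at the price of replacing the linear swap by the non-linear involution $u\mapsto w=(1-x^{3})/u$, which has the elliptic curve $C$ as its fixed locus inside $\mathbb{A}^{2}$. This is exactly the freedom granted by the weak notion and forbidden by the strong one, so the two halves together establish the desired separation.
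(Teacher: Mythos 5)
Your proof is correct and follows essentially the same route as the paper: the negative half uses linearisability of $\mu_{2}$-actions on $\mathbb{A}^{2}$ to contrast the rational fixed locus of a linear involution with the elliptic branch curve, and the positive half uses the same Danielewski-type chart (your $U_{\zeta}$ for $\zeta=1$ is exactly the paper's open set $V$ with $x=uw+1$). If anything, your version is slightly more complete, since you exhibit all three charts $U_{1},U_{\omega},U_{\omega^{2}}$ and verify that they cover $S$, a point the paper leaves implicit.
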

\begin{proof}
The surface $S$ is the cyclic cover of $\mathbb{A}^{2}$ of order
$2$ branched along the smooth affine elliptic curve $C=\{y^{2}+x^{3}-1=0\}\subset\mathbb{A}^{2}$.
By construction, the inverse image of $C$ in $S$ is equal to the
fixed points set of the involution. It follows that $S$ is not $\mu_{2}$
rational at the point $p=(1,0,0)$. Indeed, every $\mu_{2}$-action
on $\mathbb{A}^{2}$ being linearisable (see \cite[Theorem 4.3]{Kam}),
its fixed points set is rational. Therefore there is no $\mu_{2}$-stable
open neighborhood of $p$ which is equivariantly isomorphic to a stable
open subset of $\mathbb{A}^{2}$ endowed with a $\mu_{2}$-action.
However, there is an open subset $U$ of $\mathbb{A}^{2}$ which can
be endowed with a $\mu_{2}$-action such that $U$ is equivariantly
isomorphic to a $\mu_{2}$-stable open neighborhood of $p$.

Indeed, letting $u=z+y$ and $v=z-y$, $S$ is isomorphic to the surface
defined in $\mathbb{A}^{3}=\mathrm{Spec}(\mathbb{C}[u,v,x])$ by the
equation $\{uv-x^{3}+1=0\}$. The open subset $V_{1}=S\setminus\{1+x+x^{2}=u=0\}$
is isomorphic to $\mathbb{A}^{2}$ with coordinates $u$ and $v/(1+x+x^{2})=(x-1)/u=w$.
Let $V=S\setminus\{1+x+x^{2}=0\}$ be an open subset in $V_{1}$,
and let $x=uw+1$, then $V$ has the following coordinate ring:

\[
\mathbb{C}\left[u,w,\frac{1}{(uw+1)^{2}+uw+1+1}\right]=\mathbb{C}\left[u,w,\frac{1}{(uw)^{2}+3uw+3}\right].
\]

\begin{flushleft}
The open subset $V$ contains the point $p$ and is stable by the
action of $\mu_{2}$ defined by: 
\par\end{flushleft}
\[
\tau\cdot(u,v)=(w((uw)^{2}+3uw+3),u((uw)^{2}+3uw+3)^{-1}).
\]
 
\begin{flushleft}
So $S$ is $\mu_{2}$-weakly rational but not $\mu_{2}$-rational.
\par\end{flushleft}
\end{proof}

\end{document}